\newtheorem{proposition}{Proposition}[section]
\newtheorem{theorem}[proposition]{Theorem}
\newtheorem{corollary}[proposition]{Corollary}
\newtheorem{lemma}[proposition]{Lemma}
\newtheorem{definition}[proposition]{Definition}
\newtheorem{example}[proposition]{Example}
\newtheorem{remark}[proposition]{Remark}
\def\argmin{ \mathop{{\rm argmin}}}
\newcommand{\cl}{\mathrm{cl}\,}
\newcommand{\dom}{\mathrm{dom}\,}
\newcommand{\epi}{\mathrm{epi}\,}
\newcommand{\gph}{\mathrm{gph}\,}
\newcommand{\para}{\mathrm{par}\,}
\newcommand{\ri}{\mathrm{ri}\,}
\newcommand{\rge}{\mathrm{rge}\,}
\newcommand{\lin}{\mathrm{lin}\,}
\newcommand{\spn}{\mathrm{span}\,}
\newcommand{\p}{\partial}
\newcommand{\R}{\mathbb{R}}
\newcommand{\bX}{\mathbb{X}}
\newcommand{\bY}{\mathbb{Y}}
\newcommand{\bP}{\mathbb{P}}
\newcommand{\bU}{\mathbb{U}}
\newcommand{\bV}{\mathbb{V}}
\newcommand{\bW}{\mathbb{W}}
\newcommand{\rp}{\mathbb R\cup\{+\infty\}}
\newcommand{\cone}{\mathrm{cone}}
\newcommand{\bN}{\mathbb{N}}
\newcommand{\ip}[2]{\left\langle #1,\, #2\right\rangle}
\newcommand{\set}[2]{\left\{#1\,\left\vert\; #2\right.\right\}}
\newcommand{\bp}{{\bar{p}}}
\newcommand{\bu}{{\bar{u}}}
\newcommand{\bv}{{\bar{v}}}
\newcommand{\bx}{{\bar{x}}}
\newcommand{\by}{{\bar{y}}}
\newcommand{\bz}{{\bar{z}}}
\newcommand{\gam}{\gamma}
\newcommand{\sig}{\sigma}
\newcommand{\lam}{\lambda}
\newcommand{\Lam}{\Lambda}
\newcommand{\cL}{\mathcal{L}}
\newcommand{\st}{\quad \mbox{s.t.}\quad }
\newcommand{\vphi}{\varphi}
\date{\today}
\title[Lipschitz stability of regularized least-squares]{Lipschitz stability of  least-squares problems regularized by functions with $\mathcal{C}^2$-cone reducible conjugates}
\author{Ying Cui}
\address{Department of Industrial Engineering and Operations Research, University of California, Berkeley, Berkeley, CA
94720, USA}
\email{yingcui@berkeley.edu}
\author{Tim Hoheisel}
\address{Department of Mathematics and Statistics, McGill University, Montr\'eal, QC H3A0B9, Canada}
\email{tim.hoheisel@mcgill.ca}
\author{Tran T. A. Nghia}
\address{Department of Mathematics and Statistics, Oakland University, Rochester, MI 48309, USA}
\email{nttran@oakland.edu}
\author{Defeng Sun}
\address{Department of Applied Mathematics, The Hong Kong Polytechnic University, Hung Hom, Kowloon, Hong Kong}
\email{defeng.sun@polyu.edu.hk}
\begin{document}

\begin{abstract}
    In this paper, we  study Lipschitz continuity  of  the solution mappings of  regularized
    least-squares  problems for which the convex regularizers  have  (Fenchel) conjugates that are $\mathcal{C}^2$-cone reducible. Our approach, by using Robinson's strong  regularity on the dual problem, allows us to obtain new characterizations of Lipschitz stability that rely solely on first-order information, thus bypassing the need to explore second-order information (curvature) of the regularizer. We show that these solution mappings are automatically Lipschitz continuous around the points in question whenever they are locally single-valued.  We leverage our findings to obtain new characterizations of full stability and tilt stability for a broader class of convex additive-composite problems.  
\end{abstract}

\maketitle

\noindent{\bf Keywords.} Least-squares, Lipschitz stability, full stability, tilt stability, convex optimization, Lasso, variational analysis and nonsmooth optimization, second-order analysis, regularized problem.
\vspace{0.1in}

\noindent{\bf Mathematics Subject Classification} (2020). 49J52, 49J53, 49K40, 90C25, 90C31

\section{Introduction} 
 Stability (or sensitivity) analysis in optimization is the study of how the optimal solutions (or  the optimal value) of an optimization problem varies as the problem-defining data (parameters) changes. This field has a rich and longstanding tradition anchored by the pioneering  works of Kojima \cite{Koj80, KoH84} and Robinson \cite{R80, R84, R87},  primarily focused on smooth, nonlinear programming in finite dimensions. The monograph by Bonnans and Shapiro \cite{BS00} presents the state-of-the-art treatment for problems with smooth data, in particular, in infinite dimensions. A rich  source of general theory  for nonsmooth problems, based on graphical differentiation of set-valued maps, is provided in the monograph by Dontchev and Rockafellar \cite{DR14}. 

Recently, there has been  renewed interest in stability analysis for the following nonsmooth,   convex optimization problems which occur ubiquitously in machine learning and statistical estimations:
\begin{equation}\label{p:MP}
    \min_{x\in \bX}\quad \frac{1}{2\mu}\|Ax-b\|^2+g(x),
\end{equation}
where $A: \bX\to\bY$ is a linear operator between two Euclidean spaces $\bX$ and $\bY$, $b$ is a vector in $\bY$, $\mu>0$ is the tuning parameter, and $g:\bX\to \R\cup\{+\infty\}$ is a closed, proper, convex function (usually called a regularizer in machine learning).  When the triple $(A,b,\mu)$ are considered parameters, it is important to study stability of the solution mapping of problem~\eqref{p:MP} defined by
\begin{equation}\label{eq:SM}
    S(A,b,\mu):=\argmin_{x\in\bX}\left\{\frac{1}{2\mu} \|Ax-b\|^2+g(x)\right\}.
\end{equation}
Specifically, in this paper, we aim to determine the conditions under which the solution mapping \eqref{eq:SM} is single-valued and Lipschitz continuous when there are small perturbations in the initial data  $(A,b,\mu)$ of problem ~\eqref{p:MP}. 

To the best of our knowledge, there are currently two main approaches for studying the stability of this solution mapping.    The first one  employs the toolkit of  modern variational analysis \`a la Dontchev and Rockafellar \cite{DR14}; see also the standard texts by   Mordukhovich \cite{M06}, or Rockafellar and Wets \cite{RoW98}. Some  recent contributions along these lines were made  by Berk et al. \cite{BBH23, BBH24}, Gfrerer and Outrata \cite{GfO22}, Hu et al. \cite{HYZ 24} (which strengthens the qualitative but not the quantitative results of Berk et al.), Meng et al. \cite{MWY24},  Nghia  \cite{N24}, and Vaiter et al. \cite{VDFPD17}. When the regularizer $g$ is the $\ell_1$ norm, the problem \eqref{p:MP} is known as the Lasso (Least Absolute Shrinkage and
Selection Operator) problem.  Berk et al. \cite{BBH23} provide a sufficient condition \cite[Assumption~4.3]{BBH23} at which the solution mapping $S$ is single-valued, Lipschitz continuous and directionally differentiable around a fixed triple $(\bar A, \bar b,\bar \mu).$ This condition appeared in \cite{MY12,T13} as a sufficient condition  for the uniqueness of a given solution to the Lasso problem. The approach in \cite{BBH23} relies on the well-known Mordukhovich criterion \cite[Theorem~4.18]{M06} for {\em metric regularity} and available second-order information of the $\ell_1$ norm. On the other hand, in the recent paper \cite{N24}, Nghia proposed a different approach via Robinson's {\em generalized implicit function theorem} \cite{R80} and {\em tilt stability} \cite{PR98} for more general functions $g$,  including popular regularizers such as the $\ell_1$ norm, the $\ell_1/\ell_2$ norm, and the nuclear norm. It is revealed in \cite[Theorem~3.7]{N24} that the aforementioned  sufficient condition in \cite{BBH23} is also necessary for the Lipschitz stability of the solution mapping \eqref{eq:SM} for the  Lasso problem. Some recent papers \cite{HYZ 24,MWY24} obtain impressive results about Lipschitz continuity of the set-valued solution mapping \eqref{eq:SM}  with fixed operator $A$ {\em relative} to its domain when $g$ is the $\ell_1$ norm. Their Lipschitz stability is different from ours in this paper and their approaches strongly rely on the polyhedral structure of the $\ell_1$ norm and its subdifferential mapping. When regularizer $g$ is {\em partly smooth},  Vaiter et al \cite[Theorem~1]{VDFPD17} proves a strong result that $S(\bar A,\cdot,\bar \mu)$ is continuously differentiable around $\bar b$ when $\bar x=S(\bar A,\bar b,\bar \mu)$ is a {\em strong  minimizer} of the corresponding problem, but they have to assume that $\bar b$ does not belong to a {\em transition space}. The latter condition seems to be involved and  recently \cite[Example~3.14]{N24} provides a simple example at which the solution mapping $S$ is Lipschitz continuous, but it is not  continuously differentiable.

The second approach,  spearheaded by Bolte  et al. \cite{BLPS21}, is to  rewrite the optimality conditions as a (locally) Lipschitz equation, say, by using proximal operators, and to employ the machinery of implicit differentiation with {\em conservative Jacobians} \cite{BP21}. The conservative Jacobians here serve as a versatile tool which, among other things, bypasses the failure of the chain rule for Clarke Jacobians. The ideas mentioned here can be extended to (maximally) monotone inclusions (which are not  necessarily optimality conditions of a convex optimization problem) \cite{BPS24}.

In this paper, we mainly study Lipschitz stability of solution mapping \eqref{eq:SM} with a fresh approach via {\em strong regularity} \cite{R80} via the dual problem. Strong regularity was introduced by Robinson in the landmark paper \cite{R80} for {\em generalized equations} with applications to {\em nonlinear programming} to study  Lipschitz stability of the {\em Lagrange system}. Robinson's strong regularity can be characterized by the well-known {\em Linear Independence Constraint Qualification} and the {\em Strong Second-Order Sufficient Condition} for nonlinear programming; see, e.g., \cite{R80,BS00,KK02,DR14} for more historic discussions and its immense applications to optimization theory and algorithms. In \cite[Theorem~5.6]{MNR15},  Mordukhovich, Nghia and Rockafellar give  full characterizations of strong regularity for more general parametric constrained optimization problems in the following setting:
\begin{equation}\label{p:OPT0}
    \min_{u\in \bU}\quad \varphi(u,p)\quad\mbox{subject to}\quad G(u,p)\in \Theta,  
\end{equation}
with Euclidean spaces $\bU,\bV, \bP$ ($\bP$ being the parameter space),  a closed and convex set $\Theta\subset\bV$, and twice continuously differentiable  cost function $\varphi:\bU\times \bP\to \R$  and constrained mapping $G:\bU\times\bP\to \bV$. It is worth noting here that \cite{MNR15} crucially assumes that the constrained set $\Theta$ is $\mathcal{C}^2$-{\em reducible}  in the sense of \cite[Definition~3.135]{BS00} that allows them to cover some previous works on the strong regularity for non-polyhedral settings such as  the {\em second-order cone programming} \cite{BR05} and the {\em semidefinite programming} \cite{S06}. Their characterizations are based  on the so-called {\em Partial Constraint Nondegeneracy} and the {\em Generalized Strong Second-Order Sufficient Condition} that will be recalled in our Subsection~2.3.

A simple, but important observation of our paper is that the solution mapping \eqref{eq:SM} is, indeed,  the mapping of  (partial)  Lagrange multipliers of the following constrained problem 
\begin{equation}\label{p:Dut}
\quad \min_{(y,t)\in\bY\times \R}\quad  \frac{\mu}{2}\|y\|^2-\ip{b}{y}+t\quad \mbox{subject to}\quad (A^*y,t)\in \epi g^*,
\end{equation} 
where $\epi g^*$  stands for the epigraph of the Fenchel conjugate function of $g$. 
This problem is in the format of \eqref{p:OPT0} and equivalent to the Fenchel-Rockafellar dual of problem \eqref{p:MP}
\begin{equation*}\label{p:Du0}
\quad \min_{y\in\bY}\quad  \frac{\mu}{2}\|y\|^2-\ip{b}{y}+g^*(A^*y).
\end{equation*} 
This unveils  the connection between the strong regularity of problem \eqref{p:Dut} and the Lipschitz stability of solution mapping \eqref{eq:SM} of problem~\eqref{p:MP}. As mentioned above, in order to apply \cite[Theorem~5.6]{MNR15}, we need to suppose that the constrained set of problem \eqref{p:Dut}, i.e,  the epigraph of the conjugate function $ g^*$, is $\mathcal{C}^2$-{\em cone reducible}. This assumption is not restrictive: it automatically holds for many important regularizers in optimization such as the $
\ell_1$ norm, the $\ell_1/\ell_2$ norm, the nuclear norm, and many more  spectral functions \cite{CDZ17} and support functions; see also our Remark~\ref{rem:Ex}.

While applying the theory of strong regularity in \cite{MNR15} to the dual problem \eqref{p:Dut}, we observe that the Generalized Second-Order Sufficient Condition holds automatically and the Partial Constraint Nondegeneracy can be simplified with first-order information on the regularizer $g$.  This  significantly distinguishes our paper from  \cite{BBH23,N24,VDFPD17} that directly work on the primal problem and  needs more advanced {\sl second-order} computation on the regularizer. In particular, we show that the solution mapping $S$ is single-valued and Lipschitz continuous around a fixed triple $(\bar A,\bar b,\bar \mu)\in \mathcal{L}(\bX,\bY)\times \bY\times \R_{++}$  of initial data  for $\bx\in S(\bar A,\bar b,\bar \mu)$ if and only if 
\[
\ker \bar A\cap {\rm par}\,  \partial g^*(\bar z)=\{0\}\quad \mbox{with}\quad \bar z:=-\frac{1}{\bar \mu}\bar A^*(\bar A\bar x-\bar b), 
\]
where ${\rm par}\,  \partial g^*(\bar z)$ denotes the {\em parallel subspace} of the subdifferential set $\partial g^*(\bar z)$. This characterization recovers some similar results obtained in \cite{BBH23,N24} for special cases of regularizers such as the $\ell_1$ norm, the $\ell_1/\ell_2$ norm, and the nuclear norm. Although our technique relies on second-order variation analysis, our condition above does not need any second-order information of $g$.

Another key aspect of our paper studies characterizations of tilt stability \cite{PR98} and full stability \cite{LPR00}, both of which also relate to Lipschitz stability of the solution mapping of  the following optimization problem
\begin{equation}\label{p:PP}
    \min_{x\in \bX}\quad f(x,p)+g(x),
\end{equation}
where $f:\bX\times \bP\to \R$ is a twice continuously differentiable function that is convex with respect to $x\in \bX$. This problem is more general than \eqref{p:MP} at which the basic parameter $p$ is considered as the triple parameter $(A,b,\mu)$.  In our framework,  full stability, as introduced by Poliquin, Levy, and Rockafellar \cite{LPR00},  mainly analyzes  Lipschitz continuity of the (local) optimal solution mapping of
\begin{equation}\label{p:FS}
    \min_{x\in \bX}\quad f(x,p)+g(x)-\ip{v}{x},
\end{equation}  
with respect to both  basic parameter $p\in \bP$ and  tilt (linear) parameter $v\in \bX$; while tilt stability is a particular case when the basic perturbation $p$ is fixed. Both tilt stability and full stability are characterized by using different advanced nonsmooth second-order structures that may be  difficult to compute \cite{GfO22,LPR00,MNR15,N24,PR98}. In Section~4, by using our results on Lipschitz stability, we achieve a new pointwise characterization for both full stability and tilt stability of problem~\eqref{p:PP}. Unlike  other characterizations of full stability and tilt stability, our pointwise condition uses first-order information of the function $g$. 
Specifically, we show that $\bar x\in \bX$ is a full (or tilt) stable optimal solution of problem \eqref{p:PP} for parameter $\bp\in \bP$ if and only if $-\nabla_x f(\bx,\bp)\in \partial g(\bar x)$ and 
\[
\ker \nabla^2_{xx} f(\bx,\bp)\cap {\rm par}\,  \partial g^*(-\nabla_x f(\bx,\bp))=\{0\};
\]
see our Theorem~\ref{thm:Lips} for further details. This condition still needs second-order information of the function $f$, but  is easy to compute, as $f$ is twice differentiable. The crucial part of this condition is that we do not need to calculate any second-order structures for the nonsmooth function $g$. Consequently, it is also a sufficient condition for Lipschitz stability of the solution mapping of problem \eqref{p:PP}. However, unlike the regularized least-squares problem~\eqref{p:MP}, this condition is not necessary for the aforementioned Lipschitz stability of problem~\eqref{p:PP}. 

\medskip

\noindent
{\em Notation:} Throughout the paper,  $\mathcal{L}(\bX,\bY)$ is the space of all linear operators from the Euclidean space $\bX$ to the Euclidean space $\bY$. For  $A\in \mathcal{L}(\bX,\bY)$, we denote its {\em adjoint} by $A^*$, its {\em kernel} (or {\em null space}) by $\ker A$ and its {\em range} by $\rge A$.
For a Fr{\'e}chet differentiable map $G:\bX \to \bY$, we write ${\mathcal J} G(x)\in \bY \times \bX$ as the Jacobian matrix of $G$ at $x\in \bX$. When $\bY = \mathbb{R}$, we have ${\mathcal J} G(x) = \nabla G(x)^*$ for any $x\in \bX$.  

\section{Preliminaries}

Let $\bX$ be a Euclidean space, i.e., a finite-dimensional real vector space with inner product denoted by $\ip{\cdot}{\cdot}$. 
The (Euclidean) norm on $\bX$ induced by the ambient inner product  is denoted by $\|\cdot\|$. The closed ball in $\bX$ with center $x\in \bX$ and radius $r>0$ is denoted by $\mathbb{B}_r(x)$.

\subsection{Tools from  convex analysis}\label{sec:convex}
For a function $f:\bX\to\rp$,  its {\em epigraph} is the set given by $\epi f:=\set{(x,\alpha)\in\bX\times \R}{f(x)\leq \alpha}$ while its {\em domain} is  the set $\dom f:=\set{x\in\bX}{f(x)<+\infty}$. We call $f$ {\em proper} if $\dom f$ is nonempty. We say that $f$ is {\em convex} if $\epi f$ is convex, and we call it {\em closed} or {\em lower semicontinuous (lsc)} if $\epi f$ is closed. The {\em (Fenchel) conjugate} of a proper $f$ is the function $f^*:X\to\rp$, given by 
$
f^*(y)=\sup_{y\in \bX}\{\ip{y}{x}-f(x)\}.
$
Its {\em subdifferential} at $\bar x\in \bX$ is the set $\partial f(\bar x)=\set{y\in \bX}{f(\bar x)+\ip{y}{x-\bar x}\leq f(x)\; \forall x\in \bX}$. When $f$ is closed, proper, convex, throughout the paper we use the following relation frequently:
\[
v\in \p f(x)\iff x\in \p f^*(v).
\]
For a convex set  $\Omega\subset \bX$,
its  {\em relative interior} is given by
\begin{equation}\label{def:Ri}
\ri \Omega:=\set{x\in \Omega}{\exists\, \epsilon>0:\; \mathbb{B}_\epsilon(x)\cap{\rm aff}\,\Omega\subset\Omega},
\end{equation}
where ${\rm aff}\, \Omega$ is the {\em affine hull} of $\Omega$. The {\em conical hull}  of $\Omega$ is $\cone\; \Omega:=\R_+\Omega$, the smallest (convex) cone containing $\Omega$. The {\em span} of $\Omega$  is denoted $\spn \Omega$. The {\em parallel subspace} of   $\Omega$ is  defined, for any $x\in \Omega$, by
\begin{equation}\label{def:Par}
\para\Omega  :=    {\rm span}\, (\Omega-x).
\end{equation}
 Note that   $\para\Omega ={\rm aff}\, \Omega-{\rm aff}\,\Omega={\rm aff}\, \Omega-x$ for any $x\in \Omega$. Thus for any $x\in \ri \Omega$, we have
\begin{equation}\label{eq:ParC}
\para\Omega  =    \cone\, (\Omega-x).
\end{equation}
The {\em lineality space} of $\Omega$, denoted by ${\rm lin}\,\Omega$, is the largest subspace in $\Omega.$  Moreover, the {\em horizon cone}  of $\Omega$ is 
\begin{equation}\label{def:Hor}
    \Omega^\infty:=\set{v\in \bX}{\exists\, \{x_k\in \Omega\}, \{t_k\}\downarrow 0:\; t_kx_k\to v},
\end{equation}
which is a closed cone, and convex as $\Omega$ is.  In particular, for a  convex function $f$ and   $\bar y\in \p f(\bar x)$, we have
\begin{equation}\label{eq:Horiz2}
    \p f(\bar x)^\infty\subset\cone(\p f(\bar x)-\bar y),
\end{equation}
cf. \cite[Theorem 3.6]{RoW98}.   The {\em normal cone} of $\Omega$ at $\bx$ is 
\begin{equation}\label{def:Nor1}
N_\Omega(\bx)=\set{y\in\bX}{\ip{y}{x-\bx}\le 0\;\forall x\in \Omega},
\end{equation}
which is  the subdifferential of the (convex) {\em indicator function} $\delta_\Omega$ of $\Omega$ at $\bx$, which is defined by $\delta_\Omega(x)=0$ if  $x\in \Omega$ and $\delta_\Omega(x)=+\infty$ otherwise. 
The following result, which is useful to our study, links many of the above mentioned objects. 

 \begin{proposition}\label{prop:Aux} Let $f:\bX\to\rp$ be a closed, proper, convex function and $\bar x\in\bX$ such that $\p f(\bar x)\neq \emptyset$. Then 
\[
\bX\times \{0\}\cap \spn N_{\epi f}(\bar x,f(\bar x))= \para \p f(\bar x)\times \{0\}.
\]
\end{proposition}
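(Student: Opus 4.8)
The plan is to reduce everything to an explicit description of the normal cone to the epigraph and then read off the two inclusions. Since $\p f(\bx)\neq\emptyset$ forces $\bx\in\dom f$, the point $(\bx,f(\bx))$ is a boundary point of $\epi f$, and a direct computation from \eqref{def:Nor1} applied to $\Omega=\epi f$ shows that $(w,s)\in N_{\epi f}(\bx,f(\bx))$ requires $s\le 0$ (let the epigraph variable tend to $+\infty$). Splitting according to $s<0$ and $s=0$ then yields
\[
N_{\epi f}(\bx,f(\bx))=\{(\lambda v,-\lambda):\lambda>0,\ v\in\p f(\bx)\}\cup\bigl(N_{\dom f}(\bx)\times\{0\}\bigr):
\]
for $s<0$ one normalizes $\lambda:=-s>0$ and recognizes the subgradient inequality, giving $v:=w/\lambda\in\p f(\bx)$; for $s=0$ one is left with $\ip{w}{x-\bx}\le 0$ for all $x\in\dom f$, i.e. $w\in N_{\dom f}(\bx)$. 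This is the only genuinely computational step, and it is standard.

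For the inclusion $\supseteq$ I would fix $\by\in\p f(\bx)$, so that $(\by,-1)\in N_{\epi f}(\bx,f(\bx))$. For any $v\in\p f(\bx)$ we also have $(v,-1)\in N_{\epi f}(\bx,f(\bx))$, hence $(v,-1)-(\by,-1)=(v-\by,0)$ lies in $\spn N_{\epi f}(\bx,f(\bx))$ and in $\bX\times\{0\}$. By \eqref{def:Par}, $\para\p f(\bx)=\spn(\p f(\bx)-\by)$, and taking spans gives $\para\p f(\bx)\times\{0\}\subseteq \bX\times\{0\}\cap\spn N_{\epi f}(\bx,f(\bx))$.

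For $\subseteq$ I would take $(w,0)\in\spn N_{\epi f}(\bx,f(\bx))$ and write it as a finite combination $\sum_k c_k(\lambda_k v_k,-\lambda_k)+\sum_j d_j(u_j,0)$ with $v_k\in\p f(\bx)$, $\lambda_k>0$, $u_j\in N_{\dom f}(\bx)$, $c_k,d_j\in\R$. Vanishing of the second coordinate forces $\sum_k c_k\lambda_k=0$, so $\sum_k c_k\lambda_k v_k=\sum_k c_k\lambda_k(v_k-\by)\in\para\p f(\bx)$, being a combination of subgradient differences. Thus $w\in\para\p f(\bx)+\spn N_{\dom f}(\bx)$, and the whole argument closes once I know the key inclusion $N_{\dom f}(\bx)\subseteq\para\p f(\bx)$.

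That key inclusion is, in my view, \emph{the hard part} and the place where the normal cone to the domain (the ``$s=0$'' block) has to be tamed. I would prove it directly: for $\by\in\p f(\bx)$ and $w\in N_{\dom f}(\bx)$, adding the subgradient inequality $f(x)\ge f(\bx)+\ip{\by}{x-\bx}$ to the normal-cone inequality $\ip{w}{x-\bx}\le 0$ (valid on $\dom f$, and trivial off it) shows $\by+w\in\p f(\bx)$; hence $w=(\by+w)-\by\in\p f(\bx)-\p f(\bx)\subseteq\para\p f(\bx)$. Equivalently, this exhibits $N_{\dom f}(\bx)$ inside the horizon cone of $\p f(\bx)$, which by \eqref{eq:Horiz2} already lies in $\para\p f(\bx)$. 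With this in hand, $\spn N_{\dom f}(\bx)\subseteq\para\p f(\bx)$ (a subspace), the ``$s=0$'' contributions are absorbed, and intersecting the span with $\bX\times\{0\}$ collapses exactly to $\para\p f(\bx)\times\{0\}$, as claimed. The $s<0$ part only ever contributes the expected slope information, so all the subtlety is concentrated in reconciling the domain's normal cone with the parallel subspace of the subdifferential.
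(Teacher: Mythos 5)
Your proof is correct and follows essentially the same route as the paper's: both decompose $N_{\epi f}(\bx,f(\bx))$ into the sloped part generated by $\p f(\bx)\times\{-1\}$ and a horizontal part, use the vanishing of the last coordinate to convert the sloped contributions into differences of subgradients, and absorb the horizontal part into $\para \p f(\bx)$. The only difference is cosmetic: you identify the horizontal block as $N_{\dom f}(\bx)\times\{0\}$ and check directly that $\by+N_{\dom f}(\bx)\subseteq\p f(\bx)$, whereas the paper writes that block as $\p f(\bx)^\infty\times\{0\}$ via \cite[Theorem~8.9]{RoW98} and absorbs it through \eqref{eq:Horiz2} --- the same inclusion in different clothing, as you yourself note.
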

\begin{proof}  First,  observe by \cite[Theorem 8.9]{RoW98} that
\begin{equation}\label{eq:Sepi}
N_{\epi f}(\bar x,f(\bar x))=\set{\sigma(v,-1)}{v\in \p f(\bar x),\;\sigma>0}\bigcup \p f(\bar x)^\infty\times \{0\}.
\end{equation}
Now, let $(v,0)\in \bX\times \{0\}\cap \spn N_{\epi f}(\bar x,f(\bar x))$. In view of \eqref{eq:Sepi}, we find scalars $\lambda_1,\dots, \lambda_r\in \R$,  $\mu_1,\dots,\mu_s\in \R$, and vectors $v_1,\dots, v_r\in \p f(\bar x)$, $w_1,\dots, w_s\in \p f(\bar v)^\infty$ such that 
\[
(v,0)=\sum_{i=1}^r\lambda_i(v_i,-1) +\sum_{j=1}^s \mu_j (w_j,0). 
\]
It follows that $\sum_{i=1}^r\lambda_i=0$, i.e., $\lambda_r=-\sum_{i=1}^{r-1}\lambda_i$.
With the previous identity, we thus obtain
\[
v=\sum_{i=1}^{r-1}\lambda_i(v_i-v_r)+\sum_{j=1}^s \mu_j w_j\subset \para \p f(\bar x)+\spn \p f(\bar x)^\infty.
\]
For any $\bar y\in \p f(\bar x)$, note from \eqref{eq:Horiz2} that
\[
\spn \p f(\bar x)^\infty\subset \spn (\p f(\bar x)-\bar y)=\para \p f(\bar x).
\]
Therefore, $v\in \para \p f(\bar x)$, which shows the desired inclusion. 

Conversely, take $(v,0)\in \para \p f(\bar x)\times \{0\}$. Then there exist $v_1,\dots, v_r\in \p f(\bar x)$ and $\lambda_1,\dots, \lambda_{r-1}\in \R$ such that 
\[
v=\sum_{i=1}^{r-1}\lambda_i (v_i-v_r). 
\]
Setting $\lambda_r:=-\sum_{i=1}^{r-1}\lambda_i$, in view of \eqref{eq:Sepi},   we find that 
\[
(v,0)=\sum_{i=1}^r\lambda_i(v_i,-1)\in \spn N_{\epi f}(\bar x,f(\bar x)).
\]
This concludes the proof. 
\end{proof}

\subsection{Tools from set-valued analysis} \label{sect:SVA}
Let $S\subset \bX$ and $\bx\in S$. We write $x_k\stackrel[S]{}{\to} \bar x$ if $\{x_k\}\to \bar x$ and $x_k\in S$ for all $k\in\bN$. The {\em regular normal cone} to $S$ at $\bar x$ is the (convex) cone defined by 
\[
\hat N_S(\bar x):=\set{y\in \bX}{\limsup_{x\stackrel[S]{}{\to} \bar x}\frac{\ip{y}{x-\bar x}}{\|x-\bar x\|}\leq 0}.
\]
It is often expedient to realize that $y\in \hat N_S(\bar x)$ if and only if 
\begin{equation}\label{eq:RegularEps}
\forall\, \varepsilon>0\;\exists\, \delta>0\;\forall\, x\in B_\delta(\bar x)\cap S:\; \ip{y}{x-\bar x}\leq \varepsilon \|x-\bar x\|.
\end{equation}
The  {\em limiting normal cone} \cite{M06,RoW98} to $S$ at $\bar x\in S$ is defined by
\begin{equation}\label{def:Nor2}
 N_S(\bx):=\set{y\in \bX}{\exists\, \{x_k\}\stackrel[S]{}{\to} \bar x,\; \{y_k\in \hat N_S(x_k)\}: \; y_k\to y}, 
\end{equation}
i.e., it is the {\em outer limit} of the regular normal cone. 
When $S$ is convex,  both the regular and the limiting normal  cone reduce to the object in \eqref{def:Nor1}.
The {\em tangent cone} to $S$ at $\bx$ is defined by
\begin{equation}\label{eq:Tan}
T_S(\bx):=\set{w\in \bX}{\exists\, \{t_k\}\downarrow 0, \{w_k\}\to w: \bx+t_kw_k\in S}.
\end{equation}

 Let us recall next the limiting and singular subdifferentials of an extended real-valued (possibly nonconvex) function $f:\bX\to \R\cup\{\infty\}$ that is lower semicontinuous at some $\bx\in \dom f$; see, e.g.,  \cite[Definition~1.77]{M06}. The {\em limiting subdifferential} of $f$ at $\bx$ is defined by
\begin{equation}\label{eq:limS}
\partial f(\bx):=\{y\in \bX|\; (y,-1)\in N_{{\rm epi}\, f}(\bx, f(\bx))\},
\end{equation}
which returns the subdifferential of convex analysis defined in Section~\ref{sec:convex} when $f$ is a convex function. 
Moreover, the {\em singular subdifferential} of $f$ at $\bx$ is defined by
\begin{equation}\label{eq:sinS}
\partial^\infty f(\bx):=\{y\in \bX|\; (y,0)\in N_{{\rm epi}\, f}(\bx, f(\bx))\},
\end{equation}
which is the horizon cone of $\partial f(\bx)$ when $f$ is convex, cf. \cite[Proposition~8.12]{RoW98}.

An  important tool from set-valued differentiation to our study is the  {\em (limiting) coderivative} \cite{M06,RoW98}. Let $F:\bX\rightrightarrows \bY$ be a set-valued mapping between two Euclidean spaces.  The {\em graph} of $F$ is defined by
\[
\gph F:=\set{(x,y)\in \bX\times\bY}{y\in F(x)}.
\]
The (limiting) coderivative of $F$ at  $\bx$ for  $\by\in  F(\bx)$ is the set-valued mapping $D^*F(\bx|\,\by):Y\rightrightarrows X$ defined by
\begin{equation}
    D^* F(\bx|\,\by)(w):=\set{z\in \bX}{(z,-w)\in N_{\gph F}(\bx,\by)}.
\end{equation}
The set-valued mapping $F$ is said to have a {\em single-valued (and Lipschitz continuous) localization at $\bx$ for $\by$}  (cf. \cite{BS00,DR14,S06,MNR15, R80}) if there exist open neighborhoods $U$ of $\bx$ and $V$ of $\by$ such that the {\em localization} $s:U\to V$ of $F$ with $\gph s=\gph F\cap (U\times V)$ is single-valued (and Lipschitz continuous). 

 We are interested in single-valued locally Lipschitz continuous functions. To make the transition from  single-valued localizations to actual single-valuedness for convex-valued mapping, the following result is useful.
\begin{lemma}\label{lem:SingleVal} Let $F:\bX\rightrightarrows \bY$ be a set-valued mapping, and let $(\bar x,\bar y)\in \gph F$. Suppose that the set $F(x)$ is convex when $x$ is around $\bx$.  Then the following are equivalent:
\begin{itemize}
\item[(i)] $F$ has a  single-valued  localization at $\bx$ for $\by$;
\item[(ii)] $F$ is single-valued around $\bx$.
\end{itemize}
\end{lemma}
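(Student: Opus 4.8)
The implication (ii)$\Rightarrow$(i) is immediate and I would dispose of it first. If $F$ is single-valued on some open neighborhood $U$ of $\bx$, I simply take $V=\bY$. Then the localization $s$ with $\gph s=\gph F\cap(U\times V)$ is exactly the restriction $F|_U$, which is single-valued by assumption, and $(\bx,\by)\in\gph s$ since $\by\in F(\bx)$ and $\by\in V=\bY$. No convexity is needed here.

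The substance lies in (i)$\Rightarrow$(ii). By the definition of a single-valued localization there are open neighborhoods $U$ of $\bx$ and $V$ of $\by$ and a single-valued map $s:U\to V$ with $\gph s=\gph F\cap(U\times V)$; unpacking this, $F(x)\cap V=\{s(x)\}$ for every $x\in U$. Shrinking $U$ if necessary, I may also assume (using the standing hypothesis) that $F(x)$ is convex for every $x\in U$. The goal is then to upgrade the identity $F(x)\cap V=\{s(x)\}$ to the stronger $F(x)=\{s(x)\}$, i.e.\ to rule out points of $F(x)$ that lie outside the window $V$.

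The plan for this upgrade is a short convexity argument. Fix $x\in U$ and let $y\in F(x)$ be arbitrary. Since $F(x)$ is convex and both $s(x)$ and $y$ belong to $F(x)$, the entire segment $(1-t)s(x)+ty$, $t\in[0,1]$, is contained in $F(x)$. Because $s(x)\in V$ and $V$ is \emph{open}, for all sufficiently small $t>0$ the point $(1-t)s(x)+ty$ lies in $V$, hence in $F(x)\cap V=\{s(x)\}$; this forces $(1-t)s(x)+ty=s(x)$, and therefore $y=s(x)$. As $y\in F(x)$ was arbitrary, $F(x)=\{s(x)\}$, so $F$ is single-valued on $U$, which is exactly (ii).

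I expect no genuine obstacle, but the only step requiring care — and the crux of the whole statement — is the last one: it is precisely the convexity of $F(x)$ together with the openness of $V$ that prevents $F(x)$ from harboring a stray point outside $V$ while $F(x)\cap V$ remains a singleton. Without convexity the equivalence is false (one could append a far-away point to each value), so the hypothesis is indispensable; with it, the result follows with no appeal to metric regularity, coderivatives, or any quantitative machinery.
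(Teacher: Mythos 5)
Your proof is correct and follows essentially the same route as the paper's: the key step in (i)$\Rightarrow$(ii) — taking an arbitrary $y\in F(x)$ and using convexity of $F(x)$ together with openness of $V$ to place $(1-t)s(x)+ty$ in $F(x)\cap V=\{s(x)\}$ for small $t>0$, forcing $y=s(x)$ — is exactly the paper's argument. The paper dismisses (ii)$\Rightarrow$(i) as straightforward, just as you do.
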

\begin{proof} {[}(i)$\Rightarrow$ (ii){]}: If there exist  neighborhoods $U$ of $\bx$ and $V$ of $\by$ such that the {\em localization} $s:U\to V$ of $F$ with $\gph s=\gph F\cap (U\times V)$ is single-valued, we may suppose w.l.o.g. that $F(x)$ is convex for any $x\in U$ (otherwise shrink the neighborhood). Pick any $y\in F(x)$.  As $F(x)$ is convex and $s(x)\in F(x)$, we find some $\lam\in (0,1)$ sufficiently small such that $s(x)+\lam(y-s(x))\in F(x)\cap V$, which implies that $s(x)+\lam(y-s(x))=s(x)$, i.e., $y=s(x)$. Hence $F(x)=s(x)$ for $x\in U$, which means $F$ is single-valued around $\bx$. The opposite implication is straightforward. 
\end{proof}

\begin{corollary}\label{cor:SingleVal} Let $h:\bX\times \bP\to \R\cup\{\infty\}$ be a proper function such that $h(\cdot,p)$ is convex for all $p\in \bP$. Define $S:\bP\to \bX$ by
\[
S(p):=\argmin_{x\in \bX} h(x,p).
\]
Let $(\bar p,\bx)\in \gph S$. Then  $S$ has a single-valued (Lipschitz) localization at $\bar p$ for $\bar x$ if and only if $S$ is single-valued (and locally Lipschitz) around  $\bar x$.

\end{corollary}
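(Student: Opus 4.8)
The plan is to regard $S$ as a set-valued mapping $S:\bP\rightrightarrows\bX$ and to invoke Lemma~\ref{lem:SingleVal} with the roles of $(\bX,\bY,\bx,\by)$ there played here by $(\bP,\bX,\bp,\bx)$. Under this identification the point $(\bp,\bx)$ lies in $\gph S$ by hypothesis, and the single-valued (Lipschitz) localization of $S$ at $\bp$ for $\bx$ is exactly the object appearing in the claim. The only standing assumption of that lemma that needs verifying is that the images $S(p)$ are convex for $p$ near $\bp$.

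First I would observe that this convexity holds for \emph{every} $p\in\bP$, not merely locally. Indeed, since $h(\cdot,p)$ is convex, its set of global minimizers
\[
S(p)=\argmin_{x\in\bX} h(x,p)
\]
is a (possibly empty) convex set, being the level set of the convex function $h(\cdot,p)$ at its infimal value $\inf_{x}h(x,p)$. In particular $S(p)$ is convex whenever $p$ ranges over a neighborhood of $\bp$, so the hypothesis of Lemma~\ref{lem:SingleVal} is met.

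With this in hand, Lemma~\ref{lem:SingleVal} immediately delivers the equivalence between $S$ having a single-valued localization at $\bp$ for $\bx$ and $S$ being single-valued around $\bp$, which is the unparenthesized part of the statement. For the Lipschitz refinement I would retrace the localization argument: if $s:U\to V$ is a single-valued Lipschitz localization on neighborhoods $U\ni\bp$ and $V\ni\bx$, then the proof of Lemma~\ref{lem:SingleVal} shows (after possibly shrinking $U$) that $S(p)=s(p)$ for all $p\in U$. Hence $S$ agrees on $U$ with the Lipschitz map $s$, so $S$ is single-valued and locally Lipschitz around $\bp$. The converse is immediate, as a single-valued, locally Lipschitz $S$ restricts to a single-valued Lipschitz localization on any sufficiently small pair of neighborhoods.

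I do not anticipate a genuine obstacle: the substantive content is the elementary fact that the argmin of a convex function is convex, after which the result reduces to a direct application of the already-established Lemma~\ref{lem:SingleVal}. The only point that calls for a little care is the bookkeeping in the Lipschitz case, namely ensuring that the Lipschitz modulus of the localization transfers to $S$ itself on a common neighborhood of $\bp$; this is handled by the same shrinking argument used in the proof of the lemma, since on $U$ the mapping $S$ literally coincides with $s$.
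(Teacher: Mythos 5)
Your proposal is correct and follows essentially the same route as the paper: the paper's own proof is a one-line appeal to Lemma~\ref{lem:SingleVal} justified by the convexity of $S(p)$ as the argmin of the convex function $h(\cdot,p)$. Your additional bookkeeping for the Lipschitz refinement (that $S$ coincides with the localization $s$ on a neighborhood, so the Lipschitz property transfers) is exactly the implicit content the paper leaves unstated.
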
\begin{proof}
    This follows immediately from \Cref{lem:SingleVal}, since $S$ is convex-valued.
\end{proof}


\subsection{Tools from parametric optimization}

For  Euclidean spaces $\bU$, $\bV$, and $\bP$,  consider the following parametric optimization problem 
\begin{equation}\label{p:OPT}
{\rm OPT}(p):\quad \min_{u\in \bU} \quad \varphi(u,p)\st  G(u,p)\in \Theta,
\end{equation}
where $\Theta\subset\bV$ is a closed and convex set,  and $\varphi:\bU\times \bP\to \R$  and $G:\bU\times \bP\to \bV$ are twice continuously differentiable functions. {\em Robinson's constraint qualification} is said to hold for {\rm OPT}($\bar p$) at some $\bu\in \bU$ with $G(\bu,\bp)\in \Theta$ (i.e., $\bar u$ is feasible for OPT($\bar p$)) if  
\begin{equation}\label{eq:RCQ}
0\in {\rm int}\, (G(\bu,\bp)+\rge {\mathcal J}_u G(\bu,\bp)-\Theta).
\end{equation}
The {\em Lagrangian} of problem OPT($p$) is $\mathscr{L}:\bU\times \bP \times \bV\to \R$ given by
\begin{equation}\label{eq:Lag}
    \mathscr{L}(u,p,\lam):=\varphi(u,p)+\ip{\lam}{G(u,p)}\quad \mbox{for}\quad (u,p,\lam)\in \bU\times \bP \times \bV.
\end{equation}
A feasible point $u\in\bU$ is called a {\em stationary} point of OPT($p$) if there exists a  {\em Lagrange multiplier} $\lam\in \bV$ satisfying  
\begin{equation}\label{eq:LM}
    0=\nabla_u\mathscr{L}(u,p,\lam)\quad \mbox{and}\quad \lam\in N_\Theta(G(u,p)). 
\end{equation}
We define 
$
\Lam(u,p):=\set{\lam \in \bV}{0=\nabla_u\mathscr{L}(u,p,\lam),\; \lam\in N_\Theta(G(u,p))}
$, the set of all Lagrange multipliers at $(u,p)$. 

Observe that  the system \eqref{eq:LM} can be written as the  {\em generalized equation} \cite{R80}:
\begin{equation}\label{eq:GE}
 {\rm GE}(p): \quad 0\in \begin{pmatrix}\nabla_u \vphi(u,p)+{\mathcal J}_u G(u,p)^*\lam\\-G(u,p)\end{pmatrix}+\begin{pmatrix}0\\N_\Theta^{-1}(\lam)\end{pmatrix}.  
\end{equation}
Let  $H:\bP\rightrightarrows\bU\times\bV$ be its {\em solution mapping}, i.e.,
$
H(p)=\set{(u,\lam)}{(u,p,\lam) \;\text{satisfies\;\eqref{eq:GE}}}.
$
Given $\bar p\in \bP  $ and $(\bar u,\bar \lambda)\in H(\bar p)$, it is an immediate question under which conditions  $H$ has a single-valued and  Lipschitz continuous localization at $\bp$ for $(\bu,\bar \lam)$ (in the sense of the definition at the end of the previous section). To answer this question, Robinson \cite{R80} came up with an original idea of considering the {\em linearized generalized equation} of \eqref{eq:GE} below:
\begin{equation}\label{eq:LGE}
\delta\in \begin{pmatrix}
0\\-G(\bu,\bar p)\end{pmatrix}    
+\begin{pmatrix}\nabla^2_{uu}\mathscr{L}(\bu,\bar p,\bar\lam)(u-\bu)+{\mathcal J}_u G(\bu,\bar p)^*(\lam-\bar\lam)\\-{\mathcal J}_uG(\bu,\bar p)(u-\bu)\end{pmatrix}+\begin{pmatrix}0\\N_\Theta^{-1}(\lam)\end{pmatrix}  
\end{equation}
with $\delta\in \bU\times \bV$. Let us set the solution mapping of this  generalized equation by $K:\bU\times \bV\rightrightarrows \bU\times \bV$ with  $K(\delta)=\{(u,\lam)|\; (u,\lam,\delta) \;\text{satisfies\;\eqref{eq:LGE}}\}$ for any $\delta\in  \bU\times \bV$. If $K$ has a single-valued and Lipschitz continuous localization around $\bar \delta=0$ for $(\bu,\bar \lam)$, then $H$ also has a single-valued and Lipschitz continuous localization around $\bp$ for $(\bu, \bar \lam)$. This is usually referred to as Robinson's implicit function theorem; see the original result by Robinson \cite[Theorem~2.1]{R80}, and its extension \cite[Theorem~2B.5]{DR14} that can be applied directly to our above framework. Lipschitz continuity of the solution mapping of system~\eqref{eq:LGE} is known as  {\em Robinson's strong regularity} defined precisely next.

\begin{definition}[Robinson's strong regularity]\label{def:RSR} Suppose that $(\bu,\bar \lam)$ is a solution of the generalized equation  ${\rm GE}(\bar p)$ in  \eqref{eq:GE}, i.e., $\bu$ is a stationary point of problem ${\rm OPT}(\bar p)$ in \eqref{p:OPT} and $\bar \lam$ is a corresponding Lagrange multiplier from \eqref{eq:LM}.
 We say that the generalized equation ${\rm GE}(\bar p)$  is strongly regular at $(\bu,\bar \lam)$ if  the solution mapping $K$ of  the linearized  generalized equation \eqref{eq:LGE}
has a single-valued and Lipschitz continuous localization around $0\in \bU\times \bV$ for $(\bu,\bar\lam)\in \bU\times \bV$. 
\end{definition}
\noindent
Robinson's strong regularity was introduced in \cite{R80} for generalized equations with applications to nonlinear programming at which  $\bU=\R^n$, $\bP=\R^d$,  $\bV=\R^{m_1+m_2}$, and $\Theta=\R^{m_1}_-\times\{0_{m_2}\}\subset \R^{m_1+m_2}$. Particularly,  Robinson showed in \cite[Theorem~4.1]{R80} that the well-known {\em Linear Independence Constraint Qualification} (LICQ) of ${\mathcal J}_u G(\bu,\bar p)$ and the so-called {\em Strong Second-Order Sufficient Condition} (SSOSC) that is also proposed by Robinson are sufficient for  strong regularity of the corresponding generalized equation \eqref{eq:GE} at $(\bar u, \bar \lam)$. If additionally $\bar u$ is an optimal solution of problem ${\rm OPT}(\bar p)$ in \eqref{p:OPT}, these two conditions are also necessary for strong regularity; see, e.g., \cite[Theorem~8.10]{KK02} and also \cite{BS00,DR96,DR14} for further historic discussions about strong regularity. For semidefinite programming (SDP), Robinson's strong regularity is characterized via different conditions including the so-called {\em constraint nondegeneracy} and the {\em SDP-Strong Second-Order Sufficient Condition} by Sun \cite{S06}. For more general constrained programming as in \eqref{p:OPT},  it is shown by Mordukhovich, Nghia, and Rockafellar in \cite{MNR15} that Robinson's strong regularity is equivalent to the {\em Partial Constraint Nondegeneracy}, which originated from \cite{R84} and the {\em generalized SSOSC} recalled later, provided that the set $\Theta$ is {\em $\mathcal{C}^2$-cone reducible}; see, e.g.,  \cite[Section~3.4.4]{BS00}. Let us recall these definitions here.

\begin{definition}[Partial Constraint Nondegeneracy]\label{def:ND}
Let $\bar p\in\bP$ and let $\bar u$ be feasible for ${\rm OPT}(\bar p)$, i.e., $G(\bar u,\bar p)\in\Theta$.
We say that the point $\bar u$ is {\em partially  nondegenerate} for $G$ with respect to $\Theta$   if 
\begin{equation}\label{eq:ND0}
\rge {\mathcal J}_u G(\bu, \bar p)+\lin T_\Theta(G(\bu,\bar p))=\bV,
\end{equation}
where $\lin T_\Theta(G(\bu,\bar p))$ is the lineality space of $T_\Theta(G(\bu,\bar p))$, the largest subspace in the tangent cone $T_\Theta(G(\bu,\bar p))$.
\end{definition}

\begin{remark}[Constraint nondegeneracy]\label{rem:ND} (a)   We note that for the case of nonlinear programming when  $\bU=\R^n$, $\bP=\R^d$,  $\bV=\R^{m_1+m_2}$, and $\Theta=\R^{m_1}_-\times\{0_{m_2}\}\subset \R^{m_1+m_2}$, and $G(\bar x)\in \Theta$,  the constraint nondegeneracy at $\bar u$   holds if and only if the {\em linear independence constraint qualification (LICQ)} holds at $\bar u$. 
\smallskip

\noindent
(b) A dual formulation of \eqref{eq:ND0} reads
\begin{equation}\label{EQ:DND}
\ker {\mathcal J}_uG(\bu, \bar p)^*\bigcap \spn N_{\Theta}(G(\bu,\bar p))=\{0\}.
\end{equation}

\end{remark}
Recall that a cone $C\subset \bX$ is called {\em pointed} if $C\cap (-C)=\{0\}$.

\begin{definition}[$\mathcal{C}^2$-cone reducible sets and functions]\label{def:Redu} The set $\Theta\subset\bV$ is said to be $\mathcal{C}^2$-{\em cone reducible} at $\bv$ if there exist a neighborhood $V$ of $\bv$, a pointed, closed, convex cone  $C$ in some Euclidean space $\bW$, and a $\mathcal{C}^2$-smooth mapping $h:V\to\bW$ such that $h(\bv)=0$, ${\mathcal J} h(\bv)\in \cL(\bV,\bW)$ is surjective, and that $\Theta\cap V=\{v\in V|\;h(v)\in C\}$.

A closed, proper, convex function $f:\bU\to \R\cup\{+\infty\}$ is  called $\mathcal{C}^2$-cone reducible at $\bu\in \dom f$ if its epigraph $\epi f$ is   $\mathcal{C}^2$-cone reducible at $(\bu,f(\bu))$. We call the function $f$ to be $\mathcal{C}^2$-cone reducible, if it is $\mathcal{C}^2$-cone reducible at any point on its domain. 
\end{definition}

We point out that any set $\Theta$ is $\mathcal{C}^2$-cone reducible at any $\bar v\in {\rm int}\, \Theta$, as we may choose a neighborhood $V\subset \Theta$ of $\bv$, $\bW=\{0\}$, $C=\{0\}$, and $h(v)\equiv0$ in the above definition.  
Many important convex sets in optimization are $\mathcal{C}^2$-cone reducible including  (convex) polyhedra \cite[Example~3.139]{BS00}, the positive semidefinite cone \cite[Example~3.140]{BS00}, or  the second-order (or Lorentz) cone \cite[Lemma~15]{BR05}. The Cartesian product of many $\mathcal{C}^2$-cone reducible convex sets is also $\mathcal{C}^2$-cone reducible; see \cite[Proposition~3.1]{Sh03}. Moreover, it is shown in \cite[Proposition 10]{CDZ17} that many {\em spectral functions} including the nuclear norm and the spectral norm are also $\mathcal{C}^2$-cone reducible.

The following result \cite[Theorem~5.6]{MNR15} provides a full characterization of Robinson's strong regularity for $\mathcal{C}^2$-cone reducible programming.

\begin{theorem}[Characterization of Robinson's strong regularity]\label{thm:RSR} Let $\bu\in \bU$ be  a stationary point of problem ${\rm OPT}(\bar p)$ in \eqref{p:OPT} with  $\bar \lam\in \Lam(\bu,\bar p)$. Suppose that the set $\Theta$ is $\mathcal{C}^2$-cone  reducible at $G(\bu,\bp)$ and that Robinson's constraint qualification \eqref{eq:RCQ} holds  at $\bar u$. Then the  following are equivalent: 
\begin{itemize}
\item[{\bf (i)}]    The generalized equation \eqref{eq:GE} is  strongly regular at $(\bu,\bar\lam)$ and $\bu$ is a local minimizer of problem ${\rm OPT}(\bar p)$;

    \item[{\bf (ii)}]  $\bu$ is  partially nondegenerate for $G$ with respect to $\Theta$ and the following so-called {\em generalized strong second-order sufficient  condition (GSSOSC)} holds 
\begin{equation}\label{eq:GSSOSC}
\ip{w}{\nabla^2_{uu}\mathscr{L}(\bu,\bar p,\bar\lam)w}+\inf\Big\{\ip{z}{{\mathcal J}_u G(\bu,\bar p)w}\big|\;z\in D^*N_\Theta(G(\bu,\bp)|\,\bar\lam)\big({\mathcal J}_u G(\bu,\bar p)w\big)\Big\}>0
\end{equation}
for any $w\neq 0$.
\end{itemize}
\end{theorem}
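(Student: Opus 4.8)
The plan is to prove the two implications by first transporting the abstract constraint through the $\mathcal{C}^2$-cone reduction, and then invoking Robinson's implicit function theorem to reduce strong regularity of \eqref{eq:GE} to single-valued Lipschitz solvability of the linearized system \eqref{eq:LGE}. Write $H:=\nabla^2_{uu}\mathscr{L}(\bu,\bp,\bar\lam)$, $J:={\mathcal J}_uG(\bu,\bp)$, and $\bar v:=G(\bu,\bp)$. By $\mathcal{C}^2$-cone reducibility there are a neighborhood $V$ of $\bar v$, a pointed closed convex cone $C\subset\bW$, and a $\mathcal{C}^2$ map $h:V\to\bW$ with $h(\bar v)=0$ and ${\mathcal J}h(\bar v)$ surjective such that $\Theta\cap V=\{v\in V\mid h(v)\in C\}$. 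Surjectivity makes the chain rule exact, so near $\bar v$ one has $N_\Theta(v)={\mathcal J}h(v)^*N_C(h(v))$ and the representation $\bar\lam={\mathcal J}h(\bar v)^*\bar\mu$ holds for a \emph{unique} $\bar\mu\in N_C(0)$. The first step I would carry out is to record, from this reduction, an explicit formula for the coderivative $D^*N_\Theta(\bar v\mid\bar\lam)$ isolating a curvature term $\langle\bar\mu,{\mathcal J}^2h(\bar v)(\cdot,\cdot)\rangle$ and a term governed by $D^*N_C$; because $C$ is a cone, the latter is expressible through the critical cone of $C$ at $0$ for $\bar\mu$, which keeps the whole object tractable.

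Second, I would use the equivalence (Robinson \cite[Theorem~2.1]{R80}, \cite[Theorem~2B.5]{DR14}) that strong regularity of \eqref{eq:GE} at $(\bu,\bar\lam)$ is precisely the single-valued Lipschitz localization of the solution map $K$ of \eqref{eq:LGE}, and characterize this in turn by \emph{strong metric regularity} at $(\bu,\bar\lam)$ of the Kojima-type map
\[
\Phi(u,\lam)=\begin{pmatrix}\nabla_u\mathscr{L}(u,\bp,\lam)\\-G(u,\bp)\end{pmatrix}+\begin{pmatrix}0\\N_\Theta^{-1}(\lam)\end{pmatrix},
\]
whose linearization at the solution is exactly \eqref{eq:LGE}. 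For such KKT operators strong metric regularity splits into a \emph{nonsingularity} condition—the homogeneous inclusion
\[
Hw+J^*\xi=0,\qquad \xi\in D^*N_\Theta(\bar v\mid\bar\lam)(Jw)
\]
admits only $(w,\xi)=0$—together with the surjectivity supplied by Robinson's constraint qualification \eqref{eq:RCQ}. The analytic crux is that, read against the coderivative formula from the first step, this nonsingularity decouples into one requirement on $w$ and one on $\xi$.

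This decoupling is the heart of the argument, and I expect it to be the main obstacle. On the $\xi$-side, the demand that $Jw=0$ force $\xi=0$ reduces, after passing through the reduction, to $\ker J^*\cap\spn N_\Theta(\bar v)=\{0\}$, i.e. the dual form \eqref{EQ:DND} of partial constraint nondegeneracy; here surjectivity of ${\mathcal J}h(\bar v)$ is what lets me pass between the range condition \eqref{eq:ND0} and this kernel condition and pins down the multiplier. On the $w$-side, evaluating the nonsingularity condition yields precisely the positivity in \eqref{eq:GSSOSC}: the quadratic term $\langle w,Hw\rangle$ plus the infimum of $\langle z,Jw\rangle$ over $z\in D^*N_\Theta(\bar v\mid\bar\lam)(Jw)$ must be strictly positive for $w\neq0$, where that infimum is exactly the curvature/coderivative contribution recorded above. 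Thus (ii) is equivalent to the combined nonsingularity, hence to single-valued Lipschitz solvability of \eqref{eq:LGE}, hence to strong regularity.

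Finally, I would tie in the local-minimizer clause of (i). For (i)$\Rightarrow$(ii), strong regularity together with local optimality of $\bu$ upgrades the (a priori nonstrict) second-order information into strict GSSOSC: optimality gives nonnegativity of the relevant form on the critical cone, while strong regularity eliminates the degenerate null directions, forcing strict positivity. Conversely, for (ii)$\Rightarrow$(i), the strict form \eqref{eq:GSSOSC} yields local quadratic growth of the reduced objective, so $\bu$ is a local minimizer, and the nonsingularity established above delivers strong regularity. The delicate bookkeeping—matching the infimum term in \eqref{eq:GSSOSC} with the exact coderivative of $N_\Theta$ and accounting for the curvature term arising from ${\mathcal J}^2h(\bar v)$—is where the bulk of the technical effort will lie.
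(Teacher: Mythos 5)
First, be aware that the paper itself contains no proof of this theorem: it is imported verbatim as \cite[Theorem~5.6]{MNR15}, so there is no internal argument to compare against, and your sketch has to stand on its own. Judged that way, it reproduces the correct statement-level architecture (cone reduction, Robinson's implicit function theorem, a homogeneous ``nonsingularity'' condition that formally splits into nondegeneracy plus GSSOSC), but the load-bearing step is asserted rather than proved. Your central claim is that single-valued Lipschitz solvability of the linearized system \eqref{eq:LGE} is equivalent to the injectivity condition
\[
Hw+J^*\xi=0,\quad \xi\in D^*N_\Theta\big(G(\bu,\bp)\,|\,\bar\lam\big)(Jw)\ \Longrightarrow\ (w,\xi)=(0,0),
\]
``together with the surjectivity supplied by \eqref{eq:RCQ}.'' A coderivative injectivity condition of this type is the Mordukhovich criterion for \emph{metric regularity} of the KKT map, not for \emph{strong} metric regularity; the upgrade to a single-valued localization is exactly where the difficulty lies, and it cannot be obtained from local monotonicity because the primal--dual operator $\Phi$ is a non-monotone saddle-point map. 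In \cite{MNR15} this gap is closed by a different mechanism entirely: strong regularity is tied to \emph{full stability} of auxiliary minimization problems, and the characterization of full stability (uniform quadratic growth, prox-regularity, and the coincidence of metric and strong metric regularity for subdifferential-type mappings) is what converts the coderivative condition into single-valuedness. Your outline supplies no substitute for this, so the equivalence you build everything on is, in effect, the theorem itself.

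Two further steps would fail as written. On the $\xi$-side, the claim that ``$Jw=0$ forces $\xi=0$'' reduces to $\ker J^*\cap\spn N_\Theta(G(\bu,\bp))=\{0\}$ requires identifying $D^*N_\Theta(G(\bu,\bp)\,|\,\bar\lam)(0)$ with $\spn N_\Theta(G(\bu,\bp))$; this needs the exact coderivative formula for $\mathcal{C}^2$-cone reducible sets and is not verified, and in the necessity direction nondegeneracy is \emph{not} extracted from the homogeneous inclusion at all --- it is obtained in \cite{MNR15} by constructing canonical perturbations that would produce a non-single-valued multiplier mapping if \eqref{eq:ND0} failed. Likewise the closing claim for the implication from (i) to (ii), that ``optimality gives nonnegativity \ldots\ strong regularity eliminates the degenerate null directions, forcing strict positivity,'' names the desired conclusion without a mechanism: passing from second-order \emph{necessary} conditions (nonnegativity on the critical cone) to the strict inequality \eqref{eq:GSSOSC} over \emph{all} $w\neq 0$ is the substantive content, and it requires exploiting the stability of minimizers under tilt and canonical perturbations, not just second-order information at the single fixed problem ${\rm OPT}(\bar p)$.
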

\noindent
In the framework of nonlinear programming, the GSSOSC is exactly the classical SSOSC of Robinson \cite{R80}. For the case $\Theta=\mathbb{S}^n_+$, the cone of of positive semi-definite matrices, it also recovers the SDP-SSOSC \cite{S06}; see \cite{MNR15} for further details and discussions. We point out that verifying GSSOSC \eqref{eq:GSSOSC} is not trivial due to the lack of explicit calculus for the coderivative of the normal cone mapping.  In Section~\ref{Lip}, however,  we will show that GSSOSC holds automatically for the  problems in the center of our study.

\section{Lipschitz stability of least-squares problems with $C^2$-cone reducible dual regularizes}\label{Lip}
\noindent
In this section, we consider the following parametric optimization problem 
\begin{equation}\label{eq:P}
{\rm P}(A,b,\mu): \quad\min_{x\in\bX}\quad  \frac{1}{2\mu} \|Ax-b\|^2+g(x),
\end{equation}
where $\bX, \bY$ are two Euclidean spaces,  $A\in \cL(\bX,\bY)$, $b\in \bY$, $\mu>0$ are treated as  parameters  and $g:\bX\to\rp$ is a closed, proper, convex function. Define the solution mapping $S:\cL(\bX,\bY)\times \bY\times \R_{++}\to \bX$ of problem \eqref{eq:P} by 
\begin{equation}\label{eq:Sp}
S(A,b,\mu):=\argmin_{x\in\bX}\left\{\frac{1}{2\mu} \|Ax-b\|^2+g(x)\right\}.
\end{equation}
Our main goal in this section is to study sufficient and necessary conditions for this solution mapping to be (locally) Lipschitz around some fixed triple $(\bar A,\bar b,\bar \mu) \in \cL(\bX,\bY)\times \bY\times \R_{++}$. 
Given $v\in \bX$, the  {\em tilt-perturbed} problem corresponding to P$(A,b,\mu)$ reads
\begin{equation}\label{eq:Q}
Q(A,b,\mu,v): \quad\min_{x\in\bX}\quad  \frac{1}{2\mu} \|Ax-b\|^2+g(x)-\ip{v}{x}.
\end{equation} 
Its  solution mapping $\widehat S:\cL(\bX,\bY)\times \bY\times \R_{++}\times \bX\to \bX$ is defined by
\begin{equation}\label{eq:SHat}
  \widehat S(A,b,\mu,v)=\argmin_{x\in \bX} \left\{\frac{1}{2\mu} \|Ax-b\|^2+g(x)-\ip{v}{x}\right\}.  
\end{equation}
Obviously, $Q(A,b,\mu,0)$ and $ P(A,b,\mu)$ are the same problem,  and thus $\widehat S(A,b,\mu,0)=S(A,b,\mu)$. Adding the linear perturbation $\ip{v}{x}$ in the problem will be useful  for Section~\ref{Sec:FS} where we leverage our analysis here for more general problems. 

{\color{blue}}

\begin{proposition}[Primal-dual optimality]\label{prop:PD}  The (Fenchel-Rockafellar)  dual of problem~\eqref{eq:Q} reads 
\begin{equation}\label{eq:D}
D(A,b,\mu,v): \quad \min_{y\in\bY}\quad  \frac{\mu}{2}\|y\|^2-\ip{b}{y}+g^*(A^*y+v).
\end{equation} 
For  $(A,b,\mu,v)\in \cL(\bX,\bY)\times \bY\times \R_{++}\times \bX$, the following are equivalent:
\begin{itemize}
    \item[(i)] $x$ solves $Q(A,b,\mu,v)$ and  $y$ solves $D(A,b,\mu,v)$;
    \item[(ii)] $A^*y+v\in \p g(x)$ and $-\mu y=Ax-b$;
    \item[(iii)] $x\in \p g^*(A^*y+v)$ and $-\mu y=Ax-b$.
\end{itemize}
\end{proposition}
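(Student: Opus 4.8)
The plan is first to confirm that $D(A,b,\mu,v)$ is indeed the Fenchel--Rockafellar dual of $Q(A,b,\mu,v)$, and then to read off the optimality system from a single elementary identity. Writing the objective of $Q$ as $f(x)+k(Ax)$ with $f(x):=g(x)-\ip{v}{x}$ and $k(z):=\tfrac{1}{2\mu}\|z-b\|^2$, a direct conjugate computation gives $f^*(\cdot)=g^*(\cdot+v)$ and $k^*(\cdot)=\tfrac{\mu}{2}\|\cdot\|^2+\ip{b}{\cdot}$. The Fenchel--Rockafellar dual $\sup_{w}\{-f^*(-A^*w)-k^*(w)\}$ then coincides, after the sign substitution $y=-w$, with $-\inf_y\{\tfrac{\mu}{2}\|y\|^2-\ip{b}{y}+g^*(A^*y+v)\}$, i.e.\ with $-\inf D$. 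Since $k$ is finite and continuous on all of $\bY$, Rockafellar's qualification $A(\ri\dom f)\cap\ri\dom k\neq\emptyset$ holds automatically (it reduces to $\dom g\neq\emptyset$), so strong duality holds and the optimal values of $Q$ and $D$ sum to zero.

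The heart of the argument is a pointwise identity. For arbitrary $x\in\bX$ and $y\in\bY$ I would verify, by expanding the completed square, that
\[
\Big[\tfrac{1}{2\mu}\|Ax-b\|^2+g(x)-\ip{v}{x}\Big]+\Big[\tfrac{\mu}{2}\|y\|^2-\ip{b}{y}+g^*(A^*y+v)\Big]
=\tfrac{1}{2\mu}\|Ax-b+\mu y\|^2+\big[g(x)+g^*(A^*y+v)-\ip{A^*y+v}{x}\big].
\]
The first term on the right is nonnegative and vanishes exactly when $-\mu y=Ax-b$; the second is the Fenchel--Young gap for the pair $(x,A^*y+v)$, hence nonnegative and vanishes exactly when $A^*y+v\in\p g(x)$. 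This one identity simultaneously yields weak duality (the left-hand side is $\geq 0$ for all $x,y$) and the characterization that the left-hand side equals zero if and only if (ii) holds.

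With this in hand the equivalences follow quickly. The implication (ii)$\Rightarrow$(i) is immediate: under (ii) the two objectives sum to zero, and weak duality then forces $x$ to minimize $Q$ and $y$ to minimize $D$. For (i)$\Rightarrow$(ii) I would use that under (i) both objectives attain their infima, so by strong duality their sum is again zero; the decomposition then forces both nonnegative terms to vanish, which is precisely (ii). Finally, (ii)$\Leftrightarrow$(iii) is just the conjugate subgradient inversion $w\in\p g(x)\Leftrightarrow x\in\p g^*(w)$ recalled in Subsection~\ref{sec:convex}, applied with $w=A^*y+v$.

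The step I expect to require the most care is (i)$\Rightarrow$(ii), since it is the only place a duality gap could intervene. A fully self-contained route that avoids invoking the Fenchel duality theorem is to apply the Fermat rule together with the Moreau--Rockafellar sum rule directly to $Q$ (legitimate because the quadratic term is finite and smooth everywhere): this produces the candidate $y:=\tfrac{1}{\mu}(b-Ax)$ satisfying (ii), hence solving $D$ by (ii)$\Rightarrow$(i). Since the dual objective is strictly convex in $y$ (owing to its $\tfrac{\mu}{2}\|y\|^2$ part), $D$ has at most one minimizer, so any $y$ solving $D$ must equal this candidate and therefore satisfy (ii).
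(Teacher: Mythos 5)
Your proof is correct. It rests on the same pillar as the paper's proof---Fenchel--Rockafellar duality, with the qualification condition automatic because the quadratic term is finite and continuous on all of $\bY$---but where the paper simply invokes the duality scheme of \cite[Example~11.41]{RoW98} together with \cite[Theorem~11.39]{RoW98} for the zero duality gap, you make the whole argument self-contained. Your pointwise identity, which splits the sum of the primal and dual objectives into a nonnegative quadratic residual plus a Fenchel--Young gap, is the genuinely different ingredient: it delivers weak duality and the characterization (ii) of simultaneous optimality in one stroke, so only the direction (i)$\Rightarrow$(ii) ever touches the strong duality theorem; and your fallback for that direction via Fermat's rule, the Moreau--Rockafellar sum rule, and strict convexity of the dual objective in $y$ removes even that dependence, yielding a more elementary (if longer) proof than the paper's citation-based one. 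The conjugate computations $f^*=g^*(\cdot+v)$ and $k^*=\tfrac{\mu}{2}\|\cdot\|^2+\ip{b}{\cdot}$, the algebra of the identity, and the final step (ii)$\Leftrightarrow$(iii) via the subgradient inversion recorded in Subsection~\ref{sec:convex} all check out.
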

\begin{proof}  We apply the duality scheme from \cite[Example 11.41]{RoW98} with $h=\frac{1}{\mu}\|\cdot\|^2, k=g$ and $c=-v$. The only thing we have to ensure is that primal and dual optimal value are equal which is guaranteed as $\dom h=\bY$, cf. \cite[Theorem 11.39]{RoW98}.
\end{proof}
Note that an equivalent formulation of the dual problem \eqref{eq:D} reads 
\begin{equation}\label{eq:D'}
 \quad D'(A,b,\mu,v):\quad  \min_{(y,t)\in\bY\times \R}\quad  \frac{\mu}{2}\|y\|^2-\ip{b}{y}+t\st (A^*y+v,t)\in \epi g^*.
\end{equation}
Although the parameter $\mu$ is chosen to be positive for problem~\eqref{eq:P}, problem~\eqref{eq:D'} is well-defined for any $\mu\in \R$.  We will apply the theory laid out for 
the parametric problem OPT$(p)$ in \eqref{p:OPT}  to problem $D'(A,b,\mu,v)$ above with the spaces $\mathbb{U}=\bY\times \R$, $\mathbb{V}=\bX\times\R$, $\mathbb{P}=\mathcal{L}(\bX,\bY)\times \bY\times\R\times \bX$, the parameter $p=(A,b,\mu,v)\in \mathbb{P}$, the variable $u=(y,t)\in \mathbb{U}$, the convex set $\Theta:=\epi g^*\subset \mathbb{V}$, and the functions
\begin{equation}\label{eq:Fts}
    \varphi(u,p):=\frac{\mu}{2}\|y\|^2-\ip{b}{y}+t\quad\mbox{and}\quad  G(u,p):=(A^*y+v,t). 
\end{equation}

To this end, we first give an appropriate expression for the partial constraint nondegeneracy of $D'(A,b,\mu,0)$.

\begin{lemma}[Partial constraint nondegeneracy for dual problem]\label{lem:NDDual}  Let $\bu=(\bar y,\bar t)$ be a solution of $D'(\bar p)$ with fixed $\bp=(\bar A,\bar b,\bar \mu,0)\in \bP$ and $\bar \mu>0$. 
Then the following are equivalent:

\begin{itemize}
\item[(i)] The partial constraint nondegeneracy holds at $(\bar y,\bar t)$ for $D'(\bar A,\bar b,\bar \mu,0)$;
\item[(ii)]$\ker \bar A\cap {\rm par}\, \partial g^*(\bar A^*\bar y)=\{0\}$.
\end{itemize}
 
\end{lemma}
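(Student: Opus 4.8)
The plan is to verify the equivalence through the \emph{dual} reformulation of partial constraint nondegeneracy recorded in \Cref{rem:ND}(b), namely
\[
\ker {\mathcal J}_u G(\bu,\bp)^*\cap \spn N_\Theta(G(\bu,\bp))=\{0\},
\]
see \eqref{EQ:DND}, and then to collapse the right-hand intersection by means of \Cref{prop:Aux}. First I would compute the partial Jacobian of $G(u,p)=(A^*y+v,t)$ from \eqref{eq:Fts} at $\bu=(\bar y,\bar t)$ and $\bp=(\bar A,\bar b,\bar\mu,0)$: since ${\mathcal J}_u G(\bu,\bp)(y',t')=(\bar A^*y',t')$, a direct inner-product computation gives the adjoint ${\mathcal J}_u G(\bu,\bp)^*(x,s)=(\bar A x,s)$, whence
\[
\ker {\mathcal J}_u G(\bu,\bp)^*=\ker\bar A\times\{0\}\subset \bX\times\{0\}.
\]

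Next I would identify the normal cone. As $\bu$ solves $D'(\bp)$ and the objective carries the term $+t$, the optimal $\bar t$ is forced to equal $g^*(\bar A^*\bar y)$ (recall $v=0$), so that $G(\bu,\bp)=(\bar A^*\bar y,g^*(\bar A^*\bar y))$ and $N_\Theta(G(\bu,\bp))=N_{\epi g^*}(\bar A^*\bar y,g^*(\bar A^*\bar y))$. Because $\ker{\mathcal J}_u G(\bu,\bp)^*$ already lies in $\bX\times\{0\}$, I may intersect first with $\bX\times\{0\}$ and invoke \Cref{prop:Aux} for the closed, proper, convex function $f=g^*$ at the point $\bar A^*\bar y$, obtaining
\[
\bX\times\{0\}\cap\spn N_{\epi g^*}(\bar A^*\bar y,g^*(\bar A^*\bar y))=\para\p g^*(\bar A^*\bar y)\times\{0\}.
\]
Combining the two displays, the dual nondegeneracy condition reduces to
\[
\big(\ker\bar A\cap\para\p g^*(\bar A^*\bar y)\big)\times\{0\}=\{(0,0)\},
\]
which holds if and only if $\ker\bar A\cap\para\p g^*(\bar A^*\bar y)=\{0\}$, i.e.\ statement (ii).

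The one hypothesis of \Cref{prop:Aux} that must be secured is $\p g^*(\bar A^*\bar y)\neq\emptyset$; this is the only place where a small argument is needed rather than bookkeeping. I would obtain it from optimality of $\bu$: since $\bar y$ minimizes the dual objective $\frac{\bar\mu}{2}\|y\|^2-\ip{\bar b}{y}+g^*(\bar A^*y)$, whose smooth part is finite on all of $\bY$, Fermat's rule together with the convex sum/chain rule (applicable under this full-domain qualification) yields $\bar b-\bar\mu\bar y\in\bar A\,\p g^*(\bar A^*\bar y)$, so in particular $\p g^*(\bar A^*\bar y)\neq\emptyset$; equivalently this can be read off \Cref{prop:PD}, which furnishes a primal point $\bar x\in\p g^*(\bar A^*\bar y)$. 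I do not anticipate any genuine obstacle: the conceptual content sits entirely in \Cref{prop:Aux}, already proved, and the remaining work is the adjoint computation, the identification $\bar t=g^*(\bar A^*\bar y)$, and this nonemptiness check. The point requiring the most care is simply recognizing that $\ker{\mathcal J}_u G(\bu,\bp)^*\subset\bX\times\{0\}$, which is precisely what makes \Cref{prop:Aux} applicable without any loss.
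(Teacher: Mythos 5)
Your proposal is correct and follows essentially the same route as the paper: pass to the dual form \eqref{EQ:DND} of nondegeneracy, compute $\ker{\mathcal J}_u G(\bu,\bp)^*=\ker\bar A\times\{0\}$, note $\bar t=g^*(\bar A^*\bar y)$ by optimality, and collapse the intersection via \Cref{prop:Aux}. Your explicit verification that $\p g^*(\bar A^*\bar y)\neq\emptyset$ (via \Cref{prop:PD}) is a detail the paper leaves implicit, but it does not change the argument.
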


\begin{proof} Set $\bar z:=\bar A^*\bar y$.  In view of \Cref{rem:ND}(b) and with the functions in \eqref{eq:Fts},  we find that constraint nondegeneracy holds  for $D'(\bp)$   at $(\bar y,\bar t)$ if and only if 
\begin{equation}\label{eq:NDD'}
\{0\} = \ker\bar A\times \{0\}\bigcap \spn N_{\epi g^*}(\bar z, \bar t).
\end{equation}
Now, note that, by optimality, we necessarily have $\bar t=g^*(\bar z)$. 
From \Cref{prop:Aux}, we know that 
\[
\bX\times \{0\}\bigcap\spn N_{\epi g^*}(\bar z, \bar t)=\para \p g^*(\bar z)\times \{0\}.
\]
This yields the desired equivalence.
\end{proof}

\noindent
The Lagrangian of $D'(A,b,\mu,v)$ in the sense of \eqref{eq:Lag} reads
\begin{equation}\label{eq:LagD}
    \mathscr{L}(u,p,\lam)=\frac{\mu}{2}\|y\|^2-\ip{b}{y}+t+\ip{\lambda_1}{A^*y+v}+\lam_2 t
\end{equation}
with $u=(y,t)\in \bU=\bY\times \R$, $p=(A,b,\mu,v)\in\bP$, and $\lam=(\lam_1,\lam_2)\in \bV=\bX\times \R$. 
Consequently, stationarity conditions for $D'(A,b,\mu,v)$ in the sense of \eqref{eq:LM} read
\begin{equation}\label{eq:StatD}
    0 = \mu y-b+A\lambda_1,\quad 0= 1+\lam_2, \quad (\lam_1, \lam_2)\in N_{\epi g^*}(A^*y+v,t),
\end{equation}
which certainly tells us that $\lam_2=-1$. 
Combining \eqref{eq:StatD} and \Cref{prop:PD}  yields the following result.

\begin{proposition}\label{prop:Stat} For  $(A,b,\mu,v)\in \cL(\bX,\bY)\times \bY\times \R_{++}\times \bX$, the following are equivalent: 
    \begin{itemize}
       \item[(i)]   $x$ solves $Q(A,b,\mu,v)$ and  $y$ solves $D(A,b,\mu,v)$;
       \item[(ii)] $(y, g^*(A^*y+v))$ solves $D'(A,b,\mu,v)$ and  $(x,-1)$ is a corresponding Lagrange multiplier of problem $D'(A,b,\mu,v)$. 
    \end{itemize}
\end{proposition}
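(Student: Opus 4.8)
The plan is to verify the equivalence by translating condition (ii) into condition (iii) of \Cref{prop:PD} and then invoking that proposition, which already records (i)$\Leftrightarrow$(iii). The stationarity computation \eqref{eq:StatD} for $D'(A,b,\mu,v)$ has done most of the work, so what remains is to reconcile it with the subdifferential relations in \Cref{prop:PD}.

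First I would dispose of the epigraphic variable. Since the objective $\frac{\mu}{2}\|y\|^2-\ip{b}{y}+t$ is strictly increasing in $t$ while the constraint $(A^*y+v,t)\in\epi g^*$ is exactly $t\geq g^*(A^*y+v)$, every minimizer of $D'(A,b,\mu,v)$ must satisfy $t=g^*(A^*y+v)$; conversely $y$ minimizes $D(A,b,\mu,v)$ if and only if $(y,g^*(A^*y+v))$ minimizes $D'(A,b,\mu,v)$. This matches the ``solution'' parts of (i) and (ii), and in particular shows that the relevant normal cone is taken at the boundary point $(A^*y+v,g^*(A^*y+v))$ of $\epi g^*$.

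Next I would unpack the multiplier condition. Writing $z:=A^*y+v$, the system \eqref{eq:StatD} evaluated at the candidate $\lambda=(\lambda_1,\lambda_2)=(x,-1)$ forces $\lambda_2=-1$ automatically and reduces to $\mu y-b+Ax=0$ together with $(x,-1)\in N_{\epi g^*}(z,g^*(z))$. The first equation is precisely $-\mu y=Ax-b$. For the second, the defining relation \eqref{eq:limS} gives $(x,-1)\in N_{\epi g^*}(z,g^*(z))$ if and only if $x\in\partial g^*(z)$, the limiting subdifferential here coinciding with the convex subdifferential since $g^*$ is closed, proper, and convex. Hence ``$(x,-1)$ is a Lagrange multiplier at $(y,g^*(z))$'' is equivalent to the conjunction $x\in\partial g^*(A^*y+v)$ and $-\mu y=Ax-b$, which is exactly condition (iii) of \Cref{prop:PD}.

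Combining the two steps, (ii) holds precisely when $y$ solves $D(A,b,\mu,v)$ and condition (iii) of \Cref{prop:PD} holds, and \Cref{prop:PD} certifies that (iii) is equivalent to (i). The equivalence (i)$\Leftrightarrow$(ii) follows. The only step requiring genuine care is the passage $(x,-1)\in N_{\epi g^*}(z,g^*(z))\Leftrightarrow x\in\partial g^*(z)$ via \eqref{eq:limS}; once this normal-cone-to-subdifferential dictionary is in place, the remainder is routine bookkeeping against \Cref{prop:PD}.
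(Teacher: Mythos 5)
Your proposal is correct and follows essentially the same route as the paper: it reduces condition (ii) to condition (iii) of \Cref{prop:PD} by combining the stationarity system \eqref{eq:StatD} with the dictionary $(x,-1)\in N_{\epi g^*}(A^*y+v,g^*(A^*y+v))\Leftrightarrow x\in\partial g^*(A^*y+v)$ (the paper cites \eqref{eq:Sepi} for this, you cite \eqref{eq:limS}; both give the same fact for the closed, proper, convex function $g^*$). Your explicit handling of the epigraphic variable $t$ and of the equivalence between minimizers of $D$ and $D'$ just spells out details the paper leaves implicit.
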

\begin{proof} The proof follows from combining \Cref{prop:PD} and the stationarity conditions in \eqref{eq:StatD} for $D'(A,b,\mu,v)$. Here, it is worth noting that  $(x,-1)\in N_{\epi g^*}(A^*y+v,g^*(A^*y+v))$ if and only if $x\in \p g^*(A^*y+v)$ by \eqref{eq:Sepi}.
\end{proof}

\noindent
To prove Lipschitz stability of the solution map $\widehat S$ around $\bar p=(\bar A,\bar b,\bar \mu,\bar v)$, 
in view of Proposition~\ref{prop:Stat}, it suffices to show that the solution mapping of the stationarity system \eqref{eq:StatD} is locally Lipschitz (in the Lagrange multipliers). By the discussion before Definition~\ref{def:RSR}, the latter occurs when Robinson's strong regularity holds at $(\bu,\bar \lambda)$ for the dual problem $D'(\bp)$.  By Theorem~\ref{thm:RSR}, we need that both the partial constraint nondegeneracy \eqref{eq:ND0} and the generalized strong second-order sufficient condition \eqref{eq:GSSOSC} are satisfied for the system~\eqref{eq:StatD}.  First we show that the generalized SSOSC~\eqref{eq:StatD} is automatic for the dual problem $D'(\bp)$.

\begin{proposition}[Validity of Generalized SSOSC]\label{prop:Val} Let $\bu=(\bar y,\bar t)$ be a solution of $D'(\bar p)$ with $\bar p=(\bar A,\bar b,\bar \mu,0)$ and $\bar \mu>0$. Suppose that $\bar\lam=(\bar \lam_1, -1)$ is the corresponding Lagrange multiplier at $(\bar y,\bar t)$ satisfying \eqref{eq:StatD}. Then the Generalized SSOSC \eqref{eq:GSSOSC} for the dual problem $D^\prime(\bar p)$ holds in the sense that 
\begin{equation}\label{eq:DSSOSC}
   \ip{w}{\nabla^2_{uu}\mathscr{L}(\bu,\bar p, \bar \lambda) w}+\inf\left\{\ip{z}{{\mathcal J} _u G(\bu,\bar{p})w}|\; z\in D^*N_\Theta(G(\bu,\bar p),\bar\lam)({\mathcal J} _u G(\bu,\bar{p})w) \right\} >0
\end{equation} 
 for any $w\in \mathbb{U}\setminus\{0\}$ with all the notations in \eqref{eq:Fts} and \eqref{eq:LagD}.   
\end{proposition}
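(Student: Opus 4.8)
The plan is to reduce the left-hand side of \eqref{eq:DSSOSC} to the sum of two transparent terms and then control each via the specific structure of the dual problem together with two general properties of the coderivative of a normal-cone map. First I would compute the ingredients. Since the Lagrangian $\mathscr{L}$ in \eqref{eq:LagD} is quadratic in $y$ and linear in $t$, direct differentiation gives $\nabla^2_{uu}\mathscr{L}(\bu,\bar p,\bar\lam)=\mathrm{diag}(\bar\mu\,I_\bY,0)$, so that for $w=(w_1,w_2)\in\bY\times\R$ one has $\ip{w}{\nabla^2_{uu}\mathscr{L}(\bu,\bar p,\bar\lam)w}=\bar\mu\|w_1\|^2$. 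Likewise, from $G(u,p)=(A^*y+v,t)$ in \eqref{eq:Fts} we obtain ${\mathcal J}_uG(\bu,\bar p)w=(\bar A^*w_1,w_2)=:d$. Writing $\bar a:=G(\bu,\bar p)=(\bar A^*\bar y,\bar t)$ and recalling $\bar\lam=(\bar\lam_1,-1)\in N_\Theta(\bar a)$ from \eqref{eq:StatD}, the left-hand side of \eqref{eq:DSSOSC} becomes
\[
\bar\mu\|w_1\|^2+\inf\set{\ip{z}{d}}{z\in D^*N_\Theta(\bar a|\,\bar\lam)(d)},
\]
with the convention that the infimum over the empty set equals $+\infty$.

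Next I would invoke two properties of $D^*N_\Theta$ that hold because $\Theta=\epi g^*$ is convex. The first is a positivity coming from monotonicity: since $N_\Theta=\partial\delta_\Theta$ is maximal monotone, its coderivative (the second-order subdifferential of the convex function $\delta_\Theta$) is positive semidefinite, i.e. $\ip{z}{d}\ge0$ for every $z\in D^*N_\Theta(\bar a|\,\bar\lam)(d)$; hence the infimum term above is always $\ge0$ (or $+\infty$). The second is an orthogonality property of its domain: because $N_\Theta(\bar a)$ is a cone containing $\bar\lam$, the ray direction $(0,\bar\lam)$ and its negative both lie in $T_{\gph N_\Theta}(\bar a,\bar\lam)$, and the same bi-tangency holds at every nearby graph point $(a',b')$ with $b'\in N_\Theta(a')$; passing to regular and then limiting normals shows that any $d\in\dom D^*N_\Theta(\bar a|\,\bar\lam)$ satisfies $\ip{d}{\bar\lam}=0$. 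With $\bar\lam=(\bar\lam_1,-1)$ this reads $\ip{\bar A^*w_1}{\bar\lam_1}-w_2=0$, so in particular $w_1=0$ forces $w_2=0$.

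Finally I would combine these by a case split on $w\neq0$. If $w_1\neq0$, then the Hessian term $\bar\mu\|w_1\|^2$ is strictly positive while the infimum term is $\ge0$, so \eqref{eq:DSSOSC} holds. If $w_1=0$, then $w_2\neq0$ and $d=(0,w_2)\neq0$; were $d\in\dom D^*N_\Theta(\bar a|\,\bar\lam)$, the orthogonality property would give $-w_2=\ip{d}{\bar\lam}=0$, a contradiction, so the coderivative set is empty and the infimum equals $+\infty$. In either case the left-hand side of \eqref{eq:DSSOSC} is strictly positive, which is the claim.

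The main obstacle is the genuinely degenerate direction $w=(0,w_2)$: there the Lagrangian Hessian is singular (the objective is only linear in $t$), the first term vanishes, and positivity must come entirely from the coderivative. The crux is therefore the orthogonality inclusion $\dom D^*N_\Theta(\bar a|\,\bar\lam)\subseteq\bar\lam^{\perp}$, which exploits only that a normal cone is conic and that $\bar\lam$ has nonzero vertical component $-1$; this is exactly what excludes the problematic $t$-direction and lets the argument bypass any second-order computation on $g^*$.
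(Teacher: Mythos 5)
Your proposal is correct, and its skeleton matches the paper's: the same computation giving $\ip{w}{\nabla^2_{uu}\mathscr{L}(\bu,\bar p,\bar\lam)w}=\bar\mu\|w_1\|^2$, the same appeal to maximal monotonicity of $N_\Theta$ for nonnegativity of the infimum term, and the same reduction to the degenerate direction $w=(0,w_2)$. Where you genuinely diverge is in how that degenerate direction is killed. The paper works concretely with $\Theta=\epi g^*$: it unwinds the limiting normal cone into sequences $(v^k,t_k,\lam_1^k,\lam_2^k)$, uses the epigraphical formula \eqref{eq:Sepi} to perturb $(\lam_1^k,\lam_2^k)$ along its own ray inside $N_{\epi g^*}(v^k,t_k)$, and extracts the bound $|w_2|\le\epsilon(\|\bar\lam_1\|+1)$ from the $\epsilon$-$\delta$ characterization \eqref{eq:RegularEps} of regular normals. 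You instead isolate the underlying mechanism as a clean, reusable lemma: since $N_\Theta(a')$ is a cone, the directions $(0,\pm b')$ are tangent to $\gph N_\Theta$ at every graph point $(a',b')$, so every regular normal $(z,-d)$ there satisfies $\ip{d}{b'}=0$, and passing to the outer limit gives $\dom D^*N_\Theta(\bar a|\,\bar\lam)\subseteq\bar\lam^{\perp}$; with $\bar\lam=(\bar\lam_1,-1)$ this makes the coderivative set empty at $d=(0,w_2)$, $w_2\neq 0$, so the infimum is $+\infty$. Your route buys generality (it uses only convexity of $\Theta$, not the epigraphical structure) and avoids the $\epsilon$ bookkeeping; the paper's route is self-contained at the level of the specific set $\epi g^*$ and, via the intermediate step $t_k=g^*(v^k)$, stays closer to the objects used elsewhere in the argument. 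One small point to make explicit if you write this up: the exact-orthogonality conclusion requires the polarity $\hat N_S(x)\subseteq T_S(x)^{\circ}$ (or, equivalently, applying \eqref{eq:RegularEps} to the two ray perturbations and letting $\varepsilon\downarrow 0$), and the convention $\inf\emptyset=+\infty$ in \eqref{eq:DSSOSC} should be stated, as you do.
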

\begin{proof} 
Note that,  by \eqref{eq:LagD}, we have
 \begin{equation}\label{eq:Hess}
 \nabla^2_{uu}\mathscr{L}(\bu,\bar p, \bar \lam) w=(\bar \mu w_1,0)\quad \forall w=(w_1,w_2)\in \bY\times \R.
 \end{equation}
In particular, $\nabla^2_{uu}\mathscr{L}(\bu,\bar p, \bar \lam)$ is positive semidefinite. Moreover, since the normal cone mapping $N_\Theta$ is  {\em maximally monotone}, we have 
\begin{equation}\label{eq:Min0}
  \inf_{z\in \bX\times \R}\left\{\ip{z}{{\mathcal J} _u G(\bu,\bar{p})w}|\; z\in D^*N_\Theta(G(\bu,\bar p)|\,\bar\lam)({\mathcal J}_u G(\bu,\bar{p})w) \right\}\ge 0, 
\end{equation}
see, e.g., \cite[Theorem~3.1]{PR98}. Therefore,  the left-hand side of \eqref{eq:DSSOSC} is nonnegative for any $w\in \mathbb{U}$. It hence suffices to show that, if it is equal to zero, then $w=0$. To this end, let $w\in \mathbb{U}$ satisfy
 \begin{equation}\label{eq:SSOSC0}
     \ip{w}{\nabla^2_{uu}\mathscr{L}(\bu,\bar p, \bar \lam) w}+\inf_{z\in \bX\times \R}\left\{\ip{z}{{\mathcal J}_u G(\bu,\bar{p})w}|\; z\in D^*N_\Theta(G(\bu,\bar p)|\,\bar\lam)({\mathcal J}_u G(\bu,\bar{p})w) \right\}=0.
 \end{equation}
Applying \eqref{eq:Min0} and \eqref{eq:Hess} to \eqref{eq:SSOSC0} yields  $w_1=0$. And consequently, we  find  that 
\begin{equation}\label{eq:Gra}
   {\mathcal J}_u G(\bu,\bar{p})w=(A^*w_1,w_2)=(0,w_2).
\end{equation}
In particular, \eqref{eq:SSOSC0} reduces to 
\[
\inf_{(z_1,z_2)\in \bX\times \R}\left\{z_2w_2|\; (z_1,z_2)\in D^*N_\Theta(G(\bu,\bar p)|\,\bar\lam)(0,w_2) \right\}=0.
\]
It remains to show that $w_2=0$. To this end, pick  $(z_1,z_2)\in D^*N_\Theta(G(\bu,\bar p)|\,\bar\lam)(0,w_2)$, i.e., 
\[
((z_1,z_2),(0,-w_2))\in N_{{\rm gph}\,N_\Theta}(G(\bu,\bar p),\bar\lam).
\]
By the definition of the normal cone \eqref{def:Nor2}, we find sequences $(v^k,t_k)\to G(\bu,\bar p)=(\bar A^*\by,\bar t)$, $(\lam_1^k,\lam_2^k)\to \bar \lam=(\bar \lam_1,-1)$, $(z_1^k,z_2^k)\to (z_1,z_2)$, and $(w_1^k,w_2^k)\to (0,w_2)$ such that $g^*(v^k)\le t_k$, $(\lam_1^k,\lam_2^k)\in N_\Theta(v^k,t_k)$ and 
\[
\left((z_1^k,z_2^k),(w_1^k,-w_2^k)\right)\in \hat N_{{\rm gph}\,N_\Theta}\left((v^k,t_k),(\lam_1^k,\lam_2^k)\right).
\]
By the expression of  regular normal vectors in \eqref{eq:RegularEps}, for all $\epsilon>0$ there exists $\delta>0$ such that for any $((v,t),(\lam_1,\lam_2))\in \gph N_\Theta\cap\mathbb{B}_\delta((v^k,t_k),(\lam_1^k,\lam_2^k))$:
\begin{equation}\label{eq:Fre}
\ip{z_1^k}{v-v^k}+z_2^k(t-t_k)+   
\ip{w_1^k}{\lam_1-\lam_1^k}-w_2^k(\lam_2-\lam_2^k)\le \epsilon\|  (v,t,\lam_1,\lam_2)-(v^k,t_k,\lam_1^k,\lam_2^k)\|.
\end{equation}
 As  $(\lam_1^k,\lam_2^k)\in N_{\epi g^*}(v^k,t_k)$, we get from \eqref{def:Nor1} that  
\[
\ip{\lam_1^k}{v-v^k}+\lam_2^k(t-t_k)\le 0\quad \mbox{for all}\quad (v,t)\in \epi g^*.
\] 
By choosing $(v,t)=(v^k,g^*(v^k))$ in the above inequality, it follows that $\lam_2^k(g^*(v^k)-t_k)\le 0$. As $\lam_2^k$ is close to $-1$ and $g^*(v^k)\le t_k$, we have $t_k=g^*(v^k)$. 

Let us choose $(\lam_1, \lam_2)$ satisfying $\dfrac{\lam_1}{\lam_2}=\dfrac{\lam_1^k}{\lam_2^k}$ and  such that $(\lam_1,\lam_2)\in \mathbb{B}_\delta(\lam_1^k,\lam_2^k)$. It follows from formula \eqref{eq:Sepi} (which is applicable as $t_k=g^*(v^k)$) that
\[
\dfrac{\lam_1}{-\lam_2}=\dfrac{\lam_1^k}{-\lam_2^k}\in \partial g^*(v^k).
\]
This implies that $(\lam_1,\lam_2)\in N_{\epi g^*}(v^k,t_k)$, i.e., $((v^k,t_k), (\lam_1,\lam_2))\in \gph N_\Theta\cap \mathbb{B}_\delta((v^k,t_k), (\lam^k_1,\lam^k_2))$. We can thus insert $\lambda_1=\lambda_2\frac{\lambda_1^k}{\lambda_2^k}$ and $v=v^k, t=t_k$ in 
\eqref{eq:Fre} to obtain
\[
\frac{\lam_2-\lam_2^k}{\lam_2^k}\ip{w_1^k}{\lam_1^k}-w_2^k(\lam_2-\lam_2^k)\le \epsilon\left[\|\lam_1^k\| \frac{|\lam_2-\lam_2^k|}{|\lam_2^k|}+|\lam_2-\lam_2^k|\right],
\]
which implies that 
\[
-w_2^k(\lam_2-\lam_2^k)\le \epsilon\left[\|\lam_1^k\| \frac{|\lam_2-\lam_2^k|}{|\lam_2^k|}+|\lam_2-\lam_2^k|\right]+\frac{|\lam_2-\lam_2^k|}{|\lam_2^k|}\cdot\left|\ip{w_1^k}{\lam_1^k}\right|.
\]
As $\lam_2$ can be  chosen around $\lam_2^k$, the above inequality implies 
\[
|w_2^k|\le \epsilon\left[\dfrac{\|\lam_1^k\|}{|\lam_2^k|}+1\right]+\dfrac{|\ip{w_1^k}{\lam_1^k}|}{|\lam_2^k|}. 
\]
Since $(\lam_1^k,\lam_2^k)\to (\bar \lam_1,-1)$ and $(w_1^k,w_2^k)\to (0,w_2)$, we get from the above inequality that 
\[
|w_2|\le \epsilon(\|\bar \lam_1\|+1)\quad \mbox{for any}\quad \epsilon>0.
\]
This tells us that $w_2=0$, which verifies the claim at the beginning.
\end{proof}

\noindent
Combining  Lemma~\ref{lem:NDDual}, Proposition~\ref{prop:Stat}, and Proposition~\ref{prop:Val} with Theorem~\ref{thm:RSR}, we obtain the sufficient condition for Lipschitz stability of the solution mapping of parametric problems $P(A, b,  \mu)$ in \eqref{eq:P} and  $Q(A, b,  \mu,v)$ in \eqref{eq:Q} below.

\begin{theorem}\label{thm:Sufi} Let  $\bx$ be a solution of $P(\bar A,\bar b, \bar \mu)$ for $(\bar A,\bar b, \bar \mu)\in \mathcal{L}(\bX,\bY)\times \bY\times\R_{++}$  and suppose that $g^*$ is a $\mathcal{C}^2$-cone reducible function at $\bar z=\frac{1}{\bar \mu}\bar A^*(\bar b-\bar A\bx)$. Assume further that  
\begin{equation}\label{eq:ND}
    \ker \bar A\cap {\rm par}\, \partial g^*(\bar z)=\{0\}.
\end{equation}
Then, the solution mapping $\widehat S$ defined in \eqref{eq:SHat}   
is single-valued and Lipschitz continuous around $(\bar A,\bar b, \bar \mu,0)$ with $\widehat S(\bar A,\bar b, \bar \mu,0)=\bx$. Consequently, the solution mapping $S$ of problem \eqref{eq:P} is also single-valued and Lipschitz continuous around $(\bar A,\bar b, \bar \mu)$ with $S(\bar A,\bar b, \bar \mu)=\bx$. 
\end{theorem}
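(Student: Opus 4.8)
The plan is to assemble Theorem~\ref{thm:Sufi} as a direct application of the machinery already established, with the bulk of the work being bookkeeping rather than new analysis. First I would set $\bar p=(\bar A,\bar b,\bar\mu,0)$, $\bu=(\bar y,\bar t)$ with $\bar y$ the dual solution (so that $-\bar\mu\bar y=\bar A\bx-\bar b$, i.e.\ $\bar A^*\bar y=\bar z$ by Proposition~\ref{prop:PD}) and $\bar t=g^*(\bar z)$, and I would verify that $(\bu,\bar\lam)$ with $\bar\lam=(\bx,-1)$ is a solution of the generalized equation for $D'(\bar p)$; this is exactly the content of Proposition~\ref{prop:Stat}. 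The point is that the primal solution $\bx$ sits as the (partial) Lagrange multiplier $\bar\lam_1$ of the dual, so Lipschitz continuity of the multiplier map for $D'$ translates into Lipschitz continuity of the primal solution map~$\widehat S$.

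Next I would invoke Theorem~\ref{thm:RSR} applied to $D'(\bar p)$ in the format~\eqref{p:OPT} with the data in~\eqref{eq:Fts}. Its three hypotheses need checking. The $\mathcal{C}^2$-cone reducibility of $\Theta=\epi g^*$ at $G(\bu,\bar p)=(\bar z,\bar t)$ is precisely the standing assumption that $g^*$ is $\mathcal{C}^2$-cone reducible at $\bar z$. Partial constraint nondegeneracy at $(\bar y,\bar t)$ is, by Lemma~\ref{lem:NDDual}, equivalent to $\ker\bar A\cap\para\,\partial g^*(\bar z)=\{0\}$, which is exactly the assumed condition~\eqref{eq:ND}. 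The generalized strong second-order sufficient condition~\eqref{eq:GSSOSC} for $D'(\bar p)$ holds automatically by Proposition~\ref{prop:Val}. I would also need Robinson's constraint qualification~\eqref{eq:RCQ}; I expect this to follow from partial nondegeneracy (nondegeneracy is the stronger ``dual'' condition and implies RCQ), so I would state that implication, citing the standard fact that $\lin T_\Theta\subset\rge\,\Theta$-directions force surjectivity of the Robinson map. With all hypotheses in place, Theorem~\ref{thm:RSR}(ii)$\Rightarrow$(i) yields that the generalized equation for $D'(\bar p)$ is strongly regular at $(\bu,\bar\lam)$, so by Robinson's implicit function theorem (the discussion preceding Definition~\ref{def:RSR}) the solution mapping $H$ of~\eqref{eq:GE} has a single-valued Lipschitz localization at $\bar p$ for $(\bu,\bar\lam)$.

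From the Lipschitz localization of $H$ I would extract the corresponding statement for $\widehat S$. The multiplier component $\bar\lam_1$ of the localization equals $\bx$, and by Proposition~\ref{prop:Stat} it coincides with the primal solution $\widehat S(A,b,\mu,v)$ for parameters near $\bar p$; hence $\widehat S$ admits a single-valued Lipschitz localization at $(\bar A,\bar b,\bar\mu,0)$. Finally, since $\widehat S(p)=\argmin_{x}\{\tfrac{1}{2\mu}\|Ax-b\|^2+g(x)-\ip{v}{x}\}$ is the argmin of a function convex in $x$, the localization upgrades to genuine single-valuedness and local Lipschitz continuity around $\bar p$ via Corollary~\ref{cor:SingleVal}. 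The assertion for $S$ follows by restricting to $v\equiv0$, since $\widehat S(A,b,\mu,0)=S(A,b,\mu)$.

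The main obstacle I anticipate is the careful handling of Robinson's constraint qualification and the translation step between the localizations of $H$ and of $\widehat S$: Theorem~\ref{thm:RSR} delivers a localization of the full solution mapping $H=(u,\lam)$ of the \emph{dual} problem, and one must argue that projecting onto the $\lam_1$-coordinate and re-parametrizing by $(A,b,\mu,v)$ (which enters $\varphi$ and $G$ in a $\mathcal{C}^2$ manner, so the reparametrization is benign) preserves single-valuedness and the Lipschitz modulus. The remaining steps are essentially invocations of results proved earlier in the excerpt, so I would keep the exposition terse there and concentrate the writing on verifying that the hypotheses of Theorem~\ref{thm:RSR} match the stated assumptions.
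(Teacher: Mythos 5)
Your proposal is correct and follows essentially the same route as the paper's proof: reformulate the dual as $D'$, verify partial constraint nondegeneracy via Lemma~\ref{lem:NDDual} (which is exactly condition~\eqref{eq:ND}), obtain the generalized SSOSC for free from Proposition~\ref{prop:Val}, apply Theorem~\ref{thm:RSR} to get strong regularity, identify the multiplier map with $\widehat S$ via Proposition~\ref{prop:Stat}, and upgrade the localization to genuine single-valuedness with Corollary~\ref{cor:SingleVal}. Your explicit check that partial nondegeneracy implies Robinson's constraint qualification (since $\lin T_\Theta(G(\bu,\bp))\subset T_\Theta(G(\bu,\bp))$) is a point the paper's proof leaves implicit, and it is correct.
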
 
\begin{proof} Let $\by$ be the unique solution of the dual problem $D(\bar A,\bar b,\bar\mu,0)$ in \eqref{eq:D}. Note from Proposition~\ref{prop:PD} that $\bar z=A^*\by$. By Lemma~\ref{lem:NDDual}, the partial constraint nondegeneracy holds at $(\by, g^*(\bar z))$ for the dual problem $D'(\bar A,\bar b,\bar \mu,0)$ in \eqref{eq:D'}. By Proposition~\ref{prop:Val}, the generalized SSOSC for the dual problem $D^\prime(\bar A,\bar b,\bar \mu,0)$ is also satisfied. Moreover, since $g^*$ is $\mathcal{C}^2$-cone  reducible at $\bar z$, its epigraph $\epi g^*$ is $\mathcal{C}^2$-cone  reducible at $G(\bar u, \bar p)$ from \eqref{eq:Fts} with $\bar u=(\bar y, g^*(\bar z))$ and $\bar p=(\bar A,\bar b, \bar \mu,0)$. We are in the position of applying Theorem~\ref{thm:RSR} to the dual problem \eqref{eq:D'}. By the discussion after Definition~\ref{def:RSR}, 
the Lagrange multiplier mapping $\Lam$ of $D^\prime(A, b,\mu,v)$ must have a single-valued and Lipschitz continuous localization around $(\bar A,\bar b,\bar \mu,0)$ for  $\bx$. By Proposition~\ref{prop:Stat}, $\Lam(A, b,\mu,v)=\widehat S(A,b,\mu,v)\times\{1\}$. Thus the solution mapping $\widehat S$ has a single-valued and Lipschitz continuous localization around $(\bar A,\bar b,\bar \mu,0)$ for $\bx$. 
 The desired statements now follow from \Cref{cor:SingleVal} and the fact that $S(A,b,\mu)=\widehat S(A,b,\mu,0)$.
\end{proof}

\begin{remark}[The class of $\mathcal{C}^2$-cone reducible conjugates]\label{rem:Ex} {\rm  Our main assumption on the regularizer $g$ in Theorem~\ref{thm:Sufi} to have a conjugate $g^*$ that  is $\mathcal{C}^2$-cone reducible at $\bar z$ appears to be restrictive at  first glance.
However, it turns out that many important convex regularizers $g$ satisfy this condition. We furnish evidence for this by the following list of (important) examples:

\begin{itemize}
    \item[(a)] {\em (Convex piecewise linear functions)} The proper convex function $g:\bX\to\R\cup\{\infty\}$ is called  convex {\em piecewise linear} on its domain if   $\dom g$ can be represented as the union of finitely many polyhedral sets, relative to each of which $g(x)$ is given by an expression $\ip{a}{x}+b$ for some $a\in \bX$ and $b\in \R$; see, e.g., \cite[Definition~2.47]{RoW98}. The Fenchel conjugate of $g$ is also a piecewise linear function according to \cite[Theorem~11.14]{RoW98}. Thus, the epigraph of $g^*$ is a polyhedra by \cite[Theorem~2.49]{RoW98}.   It follows from \cite[Example~3.139]{BS00} that $\epi g^*$ is $\mathcal{C}^2$-cone reducible. This tells us that the conjugate of any proper convex piecewise linear function is $\mathcal{C}^2$-cone reducible.

    \item[(b)] {\em (Support functions of $\mathcal C^2$-cone reducible set)} Let  $g$ be the {\em support function} of a closed, convex, and $\mathcal{C}^2$-cone reducible set $C\subset \bX$, i.e., 
\begin{equation}\label{eq:support}
g(x)=\sigma_C(x):=\sup\set{\ip{v}{x}}{v\in C}.
\end{equation}
Note that $g^*=\delta_C$, the indicator function of $C$. Hence $\epi g^*=C\times \R_+$. Obviously, $\R_+$  is polyhedral, hence $\mathcal{C}^2$-cone reducible \cite[Example~3.139]{BS00}. Thus,  the Cartesian product $C\times \R_+$ is $\mathcal{C}^2$-cone reducible. It follows that $g^*$ is $\mathcal{C}^2$-cone reducible. 

This class of support functions is broad. It includes norm functions that are widely used as regularizers, see the two following examples.

\item[(c)] {\em ($\ell_1/\ell_2$ norm)} Let 
$g$  be  the $\ell_1/\ell_2$ norm in $\R^n$ given  by
\begin{equation}\label{eq:l12}
     g(x)=\|x\|_{1,2}:=\sum_{J\in \mathcal{G}}\|x_J\|\quad \mbox{for any}\quad x=(x_J)_{J\in \mathcal{G}}\in \R^n,
\end{equation}
where $\mathcal{G}$ is a partition of $\{1,2,\ldots,n\}$ with $m$ different groups $J\in \mathcal{G}$ and $\|x_J\|$ is the Euclidean norm at $x_J\in\R^{|J|}$ for any $J\in \mathcal{G}$.  We note that this covers the $\ell_1$ norm by the partition $\set{\{i\}}{i=1,\dots, n}$.

 Now, define the set $C\subset \R^n$ by 
\[
C:=\prod_{J\in \mathcal{G}}\mathbb{B}_J,
\]
\noindent
where $\mathbb{B}_J$ is the  (Euclidean) unit ball in $\R^{|J|}$.  Then it is easy to see that $g=\sig_C$. Note that $\mathbb{B}_J=\{v\in \R^{|J|}|\,h(v):=1-\|v\|^2\in \R_+\}$ is $\mathcal{C}^2$-cone reducible at any $v$ with $\|v\|=1$, as $h(v)=0$ and ${\mathcal J} h(v):\R^{|J|}\to \R, x\mapsto -2v^Tx$ is surjective. As discussed after Definition~\ref{def:Redu}, $\mathbb{B}_J$ is also  $\mathcal{C}^2$-cone reducible at any $\bv\in {\rm int}\, \mathbb{B}_J$. Thus, it is $\mathcal{C}^2$-cone reducible at any $\bv\in \mathbb{B}_J$. It follows that the Cartesian product $C$ defined above is  $\mathcal{C}^2$-cone reducible.
Consequently, $g^*=\delta_C$ is $\mathcal{C}^2$-cone reducible. 

\item[]

\item[(d)]{{\em (Nuclear norm)}} Another important regularizer is the {\em nuclear norm} $\|X\|_*$, the sum of all {\em singular values} of $X\in \R^{m\times n}$ ($m\le n$). Its Fenchel conjugate is also $\mathcal{C}^2$-cone reducible; see, e.g., \cite[Proposition~3.2 and Remark~3.4]{CDZ17}. This fact can be explained directly and differently as follows. The nuclear norm can be written as the support function $\|X\|_*=\sigma_{\mathbb B}(X)$, where $\mathbb{B}=\{Z\in \R^{m\times n}|\; \|Z\|_2\le 1\}$ is the {\em spectral unit ball} in $\R^{m\times n}$ with $\|\cdot\|_2$ being the spectral norm. We show next that $\mathbb{B}$ is $\mathcal{C}^2$-cone reducible at any $\bar Z$ on the boundary of $ \mathbb{B}$, i.e., $\|\bar Z\|_2=1$.  Note that the spectral unit ball is represented by 
\begin{equation}\label{eq:Spec}
\mathbb{B}=\left\{Z\in \R^{m\times n}|\, G(Z):=\mathbb{I}_n-Z^TZ\in \mathbb{S}^{n}_+\right\},
\end{equation}
where $\mathbb{I}_{n}$ is the $n\times n$ identity matrix and   $\mathbb{S}^{n}_+$ is the set of all positive semidefinite $n\times n$ matrices. As the positive semidefinite cone $\mathbb{S}^n_+$ is $\mathcal{C}^2$-cone reducible \cite[Example~3.140]{BS00}, it suffices to show that $\bar Z$ is nondegenerate for $G$ with respect to $\mathbb{S}^{n}_+$  according to \cite[Proposition~3.2]{Sh03}, which means 
\begin{equation}\label{eq:BND}
    \rge{\mathcal J} G(\bar Z)+\lin T_{\mathbb{S}^{n}_+}(G(\bar Z))=\mathbb{S}^{n},
\end{equation}
where $\mathbb{S}^{n}$ is the space of all $n\times n$ symmetric matrices. Suppose the Singular Value Decomposition (SVD) of $\bar Z$ is  $\bar Z=\bar U \Sigma \bar V^T$, where $\bar U\in \R^{m\times m}$ and $\bar V\in \R^{n\times n}$ are orthogonal matrices and $\Sigma=\begin{pmatrix}{\rm Diag}\,(\sigma_1, \ldots, \sigma_m)&0\end{pmatrix}$ has two matrix blocks: the first block of $m\times m$ is the diagonal matrix  containing all singular values $\sigma_1\ge \sigma_2\ldots\ge \sigma_m$ of $\bar Z$  and the second block is just zero $m \times (n-m)$ matrix. Note that 
\begin{equation}\label{eq:GraG}
{\mathcal J} G(\bar Z)X=-\bar Z^T X- X^T\bar Z=-\bar V\Sigma^T \bar U^TX-X^T\bar U\Sigma \bar V^T \quad \mbox{for any}\quad X\in \R^{m\times n}.
\end{equation}
Since $\|\bar Z\|_2=1$, we have $\sigma_1=1$. Let $p$ be the multiplicity of $\sigma_1$ in $\{\sigma_1, \ldots, \sigma_m\}$, i.e., $1=\sigma_p>\sigma_{p+1}$ (with $\sigma_{m+1} := 0$). Choose $X=\bar U\begin{pmatrix}A&0\\0&0\end{pmatrix} \bar V^T$ for any $A\in \mathbb{S}^p$, we derive from \eqref{eq:GraG} that 
\[
{\mathcal J} G(\bar Z)X=-\bar V\Sigma^T\begin{pmatrix}A&0\\0&0\end{pmatrix} \bar V^T-\bar V \begin{pmatrix}A&0\\0&0\end{pmatrix}\Sigma \bar V^T=
-2\bar V\begin{pmatrix}A&0\\0&0\end{pmatrix}\bar V^T.
\]
It follows that 
\begin{equation}\label{eq:ImG}
    \rge\, {\mathcal J} G(\bar Z)\supset\left\{\bar V\begin{pmatrix}A&0\\0&0\end{pmatrix}\bar V^T|\; A\in \mathbb{S}^p\right\}.
\end{equation}
Moreover, observe further that 
\begin{eqnarray*}\begin{array}{ll}
T_{\mathbb{S}^{n}_+}(G(\bar Z))&=\; \; T_{\mathbb{S}^{n}_+}(\bar V(\mathbb{I}_n-\Sigma^T\Sigma)\bar V^T)\\
&=\; \; T_{\mathbb{S}^{n}_+}\left(\bar V\begin{pmatrix}0_{p\times p}&0&0\\0&{\rm Diag}\,(1-\sigma^2_{p+1}, \ldots, 1-\sigma^2_m)&0\\0&0&\mathbb{I}_{n-m}
\end{pmatrix}\bar V^T\right).
\end{array}
\end{eqnarray*}
By the well-known formula of tangent cone of the positive semidefinite cone \cite[Example~2.65]{BS00}, we have 
\[
T_{\mathbb{S}^{n}_+}(G(\bar Z))=\left\{\bar V\begin{pmatrix}A&B\\B^T&C\end{pmatrix}\bar V^T|\; A\in \mathbb{S}_+^p, B\in \R^{p\times (n-p)}, C\in  \mathbb{S}^{n-p}\right\}. 
\]
It follows that 
\[
\lin\, T_{\mathbb{S}^{n}_+}(G(\bar Z))=\left\{\bar V\begin{pmatrix}0&B\\B^T&C\end{pmatrix}\bar V^T|\; B\in \R^{p\times (n-p)}, C\in  \mathbb{S}^{n-p}\right\},
\]
which together with \eqref{eq:ImG} verifies \eqref{eq:BND}. Thus, $\bar Z$ is nondegenerate for $G$ with respect to $\mathbb{S}^n_+$. The preimage $\mathbb{B}=G^{-1}(\mathbb{S}^n_+)$ is $\mathcal{C}^2$-cone reducible at $\bar Z$ due to  \cite[Proposition~3.2]{Sh03}. As $\mathbb{B}$ is always $\mathcal{C}^2$-cone reducible at point in is interior, it is $\mathcal{C}^2$-cone reducible at any $\bar Z\in \mathbb{B}$. This explains the fact that the Fenchel conjugate of the nuclear norm is  $\mathcal{C}^2$-cone reducible. There are many more spectral functions whose Fenchel conjugates are also $\mathcal{C}^2$-cone reducible \cite[Proposition~3.2]{CDZ17}.
\end{itemize}
 \hfill$\triangle$
}
\end{remark}

\noindent
 We now show that qualification condition \eqref{eq:ND} is necessary for the solution map $S(\bar A,\cdot, \bar \mu)$ in \eqref{eq:Sp} to be single-valued. 


\begin{theorem}[Necessity of single-valuedness of solution mapping $S(\bar A,\cdot, \bar \mu)$]\label{thm:Nes} Suppose that  the solution mapping $S(\bar A, b, \bar\mu)$ in \eqref{eq:Sp} is single-valued around $\bar b$ with  $\bx=S(\bar A,\bar b, \bar \mu)$. Then, with $\bar z=\frac{1}{\bar \mu}\bar A^*(\bar b-\bar A\bar x)$,    condition~\eqref{eq:ND} is satisfied.
    
\end{theorem}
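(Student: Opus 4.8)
The plan is to argue by contraposition: assuming the qualification \eqref{eq:ND} fails, I will exhibit parameters $b$ arbitrarily close to $\bar b$ for which $S(\bar A,\cdot,\bar\mu)$ has at least two distinct solutions, contradicting single-valuedness around $\bar b$. So suppose there is $d\neq 0$ with $d\in\ker\bar A\cap{\rm par}\,\partial g^*(\bz)$. First I record the optimality setup: since $\bx$ solves $P(\bar A,\bar b,\bar\mu)$, Proposition~\ref{prop:PD} (with $v=0$) supplies the dual solution $\by=\frac{1}{\bar\mu}(\bar b-\bar A\bx)$, for which $\bar A^*\by=\bz$ and $\bx\in\partial g^*(\bz)$; in particular $\partial g^*(\bz)\neq\emptyset$.

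The crux is that $d$ need not be a feasible direction of $\partial g^*(\bz)$ at $\bx$ itself (for instance, $\bx$ may be an extreme point), which is exactly why single-valuedness \emph{at} $\bar b$ alone cannot force \eqref{eq:ND}, and why perturbing $b$ is essential. To realize $d$ as a genuine feasible direction I would move into the relative interior: pick $\hat x\in\ri\partial g^*(\bz)$ and set $x_s:=(1-s)\bx+s\hat x$ for $s\in(0,1]$. Standard convex analysis gives $x_s\in\ri\partial g^*(\bz)$ together with $x_s\to\bx$ as $s\downarrow 0$. By \eqref{eq:ParC}, ${\rm par}\,\partial g^*(\bz)=\cone(\partial g^*(\bz)-x_s)$, so $d\in\cone(\partial g^*(\bz)-x_s)$ produces some $\lambda_s>0$ with $x_s+\lambda_s^{-1}d\in\partial g^*(\bz)$, whence by convexity $x_s+t d\in\partial g^*(\bz)$ for all $t\in[0,\lambda_s^{-1}]$.

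Now I set $b_s:=\bar A x_s+\bar\mu\by$, so that $b_s\to\bar b$ as $s\downarrow 0$. Using $\bar A d=0$ and $\bar A^*\by=\bz$, I verify via Proposition~\ref{prop:PD}(iii) that both $x_s$ and $x_s+t d$ (for any fixed $t\in(0,\lambda_s^{-1}]$) solve $P(\bar A,b_s,\bar\mu)$ with the same dual $\by$: indeed both lie in $\partial g^*(\bz)=\partial g^*(\bar A^*\by)$, and $\bar A(x_s+t d)-b_s=\bar A x_s-b_s=-\bar\mu\by$. Since $d\neq 0$, these are two distinct solutions, so $S(\bar A,\cdot,\bar\mu)$ fails to be single-valued at $b_s$. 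As $b_s$ can be taken arbitrarily close to $\bar b$, this contradicts single-valuedness around $\bar b$ and establishes \eqref{eq:ND}.

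The main obstacle, and the only genuinely delicate point, is the one isolated above: membership of $d$ in the parallel subspace does not make it a feasible direction at the specific solution $\bx$, so a second solution cannot be produced at $\bar b$ directly. The relative-interior shift $x_s$ together with the matched perturbation $b_s$ is precisely what converts $d\in{\rm par}\,\partial g^*(\bz)$ into an actual family of nearby multiply-valued instances, which is where the ``around $\bar b$'' hypothesis (rather than merely ``at $\bar b$'') does the work.
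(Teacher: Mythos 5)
Your proof is correct. The underlying construction is the same as the paper's: both arguments exploit $\cl[\ri\partial g^*(\bz)]=\partial g^*(\bz)$ to pick points $x_s\in\ri\partial g^*(\bz)$ converging to $\bx$, and both use the matched perturbation $b_s=\bar b+\bar A(x_s-\bx)$, which keeps $\bz$ as the dual certificate so that $x_s$ solves $P(\bar A,b_s,\bar\mu)$. The difference is in how the relative-interior points are converted into the condition \eqref{eq:ND}: the paper runs the argument directly, invoking the external uniqueness characterization of \cite[Theorem~3.1]{FNP24} to pass from ``$x_s$ is the unique solution'' to $\ker\bar A\cap\cone(\partial g^*(\bz)-x_s)=\{0\}$, and then applies \eqref{eq:ParC}; you instead argue by contraposition and re-derive the only direction of that characterization you actually need, by explicitly producing a second solution $x_s+td$ from a nonzero $d\in\ker\bar A\cap\para\partial g^*(\bz)$ via \Cref{prop:PD}(iii). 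Your route buys self-containedness (no appeal to \cite{FNP24}) at the cost of a slightly longer verification; the paper's is shorter but leans on the external result. Your closing remark correctly isolates why the hypothesis must be ``single-valued around $\bar b$'' rather than ``at $\bar b$'': $d$ need not be a feasible direction at $\bx$ itself, and the relative-interior shift is what makes it one.
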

\begin{proof}Suppose that  the solution mapping $S(\bar A,b,\bar \mu)$ in \eqref{eq:Sp} is single-valued around $\bar b$ with $\bx=S(\bar A,\bar b, \bar \mu)$. By first-order optimality conditions, we have $\bar z\in \partial g(\bx)$, i.e., $\bar x\in \partial g^*(\bar z)$. Since $\partial g^*(\bar z)$ is a closed convex set, we have 
\begin{equation}\label{eq:Ri}
\cl[{\rm ri}\, \partial g^*(\bar z)]=\partial g^*(\bar z),
\end{equation}
see, e.g., \cite[Theorem~6.3]{R70}.  It follows that there exists a sequence $\{x_k\in  {\rm ri}\, \partial g^*(\bar z)\}\to \bar x$.  Therefore, we find that 
\[
b_k:=\bar b+\bar A(x_k-\bx)\to \bar b. 
\]
Observe that 
\[
\frac{1}{\bar \mu}\bar A^*(b_k-\bar Ax_k)=\frac{1}{\bar \mu}\bar A^*(\bar b-\bar A\bar x)=\bar z\in \partial g(x_k). 
\]
By first-order optimality, we thus infer that  $x_k\in S(\bar A,b_k,\bar \mu)$ is the (by assumption) unique solution of the problem $P(\bar A, b_k,\bar \mu)$ for all $k\in\bN$ sufficiently large. By \cite[Theorem~3.1]{FNP24}, which provides a full characterization of solution uniqueness for convex optimization problems,  we  obtain that
\begin{equation}\label{Uniq}
    \ker\bar A\cap \cone(\partial g^*(\bar z)-x_k)=\{0\}
\end{equation}
 for all $k\in \bN$ sufficiently large.
As $x_k\in \ri \partial g^*(\bar z)$, we have $\cone\,(\partial g^*(\bar z)-x_k)=\para \p g^*(\bar z)$ (cf.~\eqref{eq:ParC}).
This together with \eqref{Uniq} verifies the condition \eqref{eq:ND}. 
\end{proof}

We are now in a position to establish the  the main result of this paper, which shows that condition \eqref{eq:ND} is not only sufficient for Lipschitz stability of the solution mapping \eqref{eq:Sp} around $(\bar A,\bar b,\bar \mu)$, but also necessary provided that the function $g^*$ is $\mathcal{C}^2$-cone reducible. Moreover, we show that the solution mapping $S$ is single-valued and Lipschitz continuous around the point in question if and only if it is single-valued, i.e., the Lipschitz stability is automatic whenever the mapping is single-valued. This equivalence is interesting, as the Lipschitz stability is usually not a free property for single-valued mappings in general.

\begin{theorem}[Full characterization of Lipschitz stability of solution mapping $S$]\label{thm:Full} Suppose that  $\bar x$ is an optimal solution of problem $P(\bar A,\bar b,\bar \mu)$ and that the function $g^*$ is $\mathcal{C}^2$-cone reducible at $\bar z=\frac{1}{\bar \mu}\bar A^*(\bar b-\bar A\bar x)$. Then the following  are equivalent: 
\begin{itemize}
    \item[{\bf(i)}] Condition \eqref{eq:ND} is satisfied. 
    \item[{\bf(ii)}] The solution mapping $S(A,b,\mu)$ in \eqref{eq:Sp} of problem~\eqref{eq:P} is single-valued and Lipschitz continuous around $(\bar A,\bar b,\bar \mu)$ with $\bx=S(\bar A,\bar b,\bar \mu)$.
    
    \item[{\bf(iii)}] The solution mapping $S(\bar A,\cdot,\cdot)$ is single-valued and Lipschitz continuous around $(\bar b,\bar \mu)$ with $\bx=S(\bar A,\bar b,\bar \mu)$.
    \item[{\bf(iv)}] The solution mapping $S(\bar A,\cdot,\bar \mu)$ is single-valued and Lipschitz continuous around $\bar b$ with $\bx=S(\bar A,\bar b,\bar \mu)$.
    
    \item[{\bf(v)}]  The solution mapping $S$ in \eqref{eq:Sp} is single-valued around $(\bar A,\bar b,\bar \mu)$ with $\bx=S(\bar A,\bar b,\bar \mu)$.

    \item[{\bf(vi)}] The solution mapping $S(\bar A,\cdot, \cdot)$ is single-valued  around $(\bar b,\bar \mu)$ with $\bx=S(\bar A,\bar b,\bar \mu)$.

     \item[{\bf(vii)}] The solution mapping $S(\bar A,\cdot,\bar \mu)$ is single-valued  around $ \bar b$ with $\bx=S(\bar A,\bar b,\bar \mu)$.
\end{itemize}
\end{theorem}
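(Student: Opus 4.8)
The plan is to prove the equivalence of all seven conditions by closing a cycle of implications in which only two arrows carry any real content—namely $\mathrm{(i)}\Rightarrow\mathrm{(ii)}$, supplied verbatim by Theorem~\ref{thm:Sufi}, and $\mathrm{(vii)}\Rightarrow\mathrm{(i)}$, supplied by Theorem~\ref{thm:Nes}—while every remaining arrow is a purely formal consequence of the definitions. Conditions (ii),(iii),(iv) assert single-valuedness \emph{together with} local Lipschitz continuity of $S$ over progressively smaller parameter blocks (all of $(A,b,\mu)$, then $(b,\mu)$ with $A=\bar A$ fixed, then $b$ alone with $A=\bar A,\mu=\bar\mu$ fixed), whereas (v),(vi),(vii) drop the Lipschitz requirement and ask only for single-valuedness over the same three parameter blocks. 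The two organizing poles are therefore (ii), the strongest statement, and (vii), the weakest.

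First I would record the two elementary principles that move everything around. \textbf{Weakening:} a single-valued Lipschitz localization is in particular single-valued, so $\mathrm{(ii)}\Rightarrow\mathrm{(v)}$, $\mathrm{(iii)}\Rightarrow\mathrm{(vi)}$, and $\mathrm{(iv)}\Rightarrow\mathrm{(vii)}$. \textbf{Restriction:} if a mapping on a product parameter space is single-valued (resp.\ single-valued and locally Lipschitz) on a neighborhood of a base point, then freezing one coordinate block at its base value yields a mapping that is single-valued (resp.\ single-valued and locally Lipschitz) on the induced neighborhood of the smaller space. Applying restriction with the block $A=\bar A$ and then the block $\mu=\bar\mu$ produces the two descending chains $\mathrm{(ii)}\Rightarrow\mathrm{(iii)}\Rightarrow\mathrm{(iv)}$ and $\mathrm{(v)}\Rightarrow\mathrm{(vi)}\Rightarrow\mathrm{(vii)}$, using the product structure of $\cL(\bX,\bY)\times\bY\times\R_{++}$ (and of the enlarged space including the tilt block).

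Next I would insert the two nontrivial implications. Since condition \eqref{eq:ND} appearing in Theorem~\ref{thm:Sufi} is exactly condition (i) here and the point $\bz=\frac{1}{\bar\mu}\bar A^*(\bar b-\bar A\bx)$ matches, Theorem~\ref{thm:Sufi} gives $\mathrm{(i)}\Rightarrow\mathrm{(ii)}$ under the standing $\mathcal{C}^2$-cone reducibility hypothesis; and since (vii) is precisely the hypothesis that $S(\bar A,\cdot,\bar\mu)$ is single-valued around $\bar b$, Theorem~\ref{thm:Nes} gives $\mathrm{(vii)}\Rightarrow\mathrm{(i)}$. Assembling these with the chains above yields the master cycle
\[
\mathrm{(i)}\Rightarrow\mathrm{(ii)}\Rightarrow\mathrm{(iii)}\Rightarrow\mathrm{(iv)}\Rightarrow\mathrm{(vii)}\Rightarrow\mathrm{(i)},
\]
which already forces (i)--(iv) and (vii) into one equivalence class; the side chain $\mathrm{(ii)}\Rightarrow\mathrm{(v)}\Rightarrow\mathrm{(vi)}\Rightarrow\mathrm{(vii)}$ then absorbs (v) and (vi) as well, since (vii) re-enters through Theorem~\ref{thm:Nes}. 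In particular the announced ``automatic upgrade''—that bare single-valuedness forces single-valuedness-plus-Lipschitz—is exactly the passage $\mathrm{(vii)}\Rightarrow\mathrm{(i)}\Rightarrow\mathrm{(ii)}$.

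The only step that is not purely bookkeeping is the restriction principle: one must verify that freezing a parameter block genuinely preserves \emph{both} single-valuedness \emph{and} the local Lipschitz modulus. This requires that the localizing neighborhoods can be taken as products, so that the slice through the base point still lies inside them; this is immediate from the Euclidean product structure of the parameter spaces but is the one place deserving an explicit word. I would also emphasize that the $\mathcal{C}^2$-cone reducibility of $g^*$ is indispensable precisely for the arrow $\mathrm{(i)}\Rightarrow\mathrm{(ii)}$, as it is the standing hypothesis of Theorem~\ref{thm:Sufi}, whereas the necessity arrow $\mathrm{(vii)}\Rightarrow\mathrm{(i)}$ of Theorem~\ref{thm:Nes} does not invoke it.
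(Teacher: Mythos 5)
Your proposal is correct and follows exactly the paper's own route: the only substantive arrows are $\mathrm{(i)}\Rightarrow\mathrm{(ii)}$ via Theorem~\ref{thm:Sufi} and $\mathrm{(vii)}\Rightarrow\mathrm{(i)}$ via Theorem~\ref{thm:Nes}, with the remaining implications $\mathrm{(ii)}\Rightarrow\mathrm{(iii)}\Rightarrow\mathrm{(iv)}\Rightarrow\mathrm{(vii)}$ and $\mathrm{(ii)}\Rightarrow\mathrm{(v)}\Rightarrow\mathrm{(vi)}\Rightarrow\mathrm{(vii)}$ being the same formal weakening/restriction chains the paper dismisses as trivial. Your explicit remark about taking product neighborhoods when freezing a parameter block is a harmless elaboration of what the paper leaves implicit.
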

\begin{proof} The implication [{\bf (i)}$\Rightarrow${\bf (ii)}] follows from Theorem~\ref{thm:Sufi}. Implications [{\bf (ii)}$\Rightarrow${\bf (iii)}$\Rightarrow${\bf (iv)}$\Rightarrow${\bf (vii)}] are trivial. We also have  [{\bf (ii)}$\Rightarrow${\bf (v)}$\Rightarrow${\bf (vi)}$\Rightarrow${\bf (vii)}]. Moreover, [{\bf (vii)}$\Rightarrow${\bf (i)}] is obtained from Theorem~\ref{thm:Nes}. 
\end{proof}

Theorem~\ref{thm:Full} generalizes the recent results in \cite[Theorem~3.12 and Theorem~4.7]{N24}, which establish the equivalence between {\bf (iii)} and {\bf (vi)} when regularizers $g$ include the $\ell_1/\ell_2$ norm and the nuclear norm via a different approach. Note from our Remark~\ref{rem:Ex}, conjugates of the latter two regularizers are $\mathcal{C}^2$-cone reducible. Moreover, our condition \eqref{eq:ND} recovers the characterizations of Lipschitz stability for the solution mapping in \eqref{eq:Sp} used in \cite{N24} for these two cases and also the one in \cite[Assumption~4.3]{BBH23} when $g$ is the $\ell_1$-norm, which appeared in \cite{MY12,T13} as a sufficient condition for the Lasso problem. The implication [{\bf (vii)}$\Longrightarrow${\bf (iv)}] in this theorem is also stronger than \cite[Theorem~3.3]{EDA24}, which shows that if $S(\bar A,\cdot,\bar \mu)$ is single-valued around $\bar b$, then it is continuous when $g$ is a convex piecewise linear function that is $\mathcal{C}^2$-cone reducible;  see, e.g., Remark~\ref{rem:Ex}. Stability of the solution mapping $S$ with respect to variable $b$ seems to be important in the proof of our Theorem~\ref{thm:Nes} and Theorem~\ref{thm:Full}. By fixing the parameter $b=\bar b$, our condition \eqref{eq:ND} is also sufficient for the Lipschitz stability of solution mappings $S(\cdot, \bar b,\bar \mu)$, $S(\cdot, \bar b,\cdot)$, and $S(\bar A, \bar b, \cdot)$ around the corresponding points due to the implication [{\bf (i)}$\Rightarrow${\bf (ii)}] in Theorem~\ref{thm:Full}. Whether it is necessary is an open question.

 We close out this section by showing that the Lipschitz stability of the solution mapping of problem \eqref{eq:P} is  automatic  when $\bar x$ is the unique solution of problem $P(\bar A,\bar b,\bar \mu)$ and the so-called {\em Dual Strict Complementarity Condition} is satisfied and  granted that  $g^*$ is $\mathcal{C}^2$-cone reducible.

\begin{theorem} Suppose that  $\bar x$ is an optimal solution of problem $P(\bar A,\bar b,\bar \mu)$ and that the function $g^*$ is $\mathcal{C}^2$-cone reducible at $\bz=\frac{1}{\bar \mu}\bar A^*(\bar b-\bar A\bar x)$. Moreover, suppose that the following Dual Strict Complementarity Condition holds
\begin{equation}\label{eq:DSCC}
\bar x\in {\rm ri}\,\partial g^*(\bar z). 
\end{equation}
Then the solution mapping $S$ in \eqref{eq:Sp} is single-valued and Lipschitz continuous around $(\bar A,\bar b,\bar \mu)$ with $S(\bar A,\bar b,\bar \mu)=\bar x$ if and only if $\bar x$ is the unique solution of problem $P(\bar A,\bar b,\bar \mu)$.
\end{theorem}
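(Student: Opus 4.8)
The plan is to route the desired equivalence through condition~\eqref{eq:ND} and the already-established Theorem~\ref{thm:Full}, using the Dual Strict Complementarity Condition~\eqref{eq:DSCC} only to pass from the uniqueness of $\bar x$ to \eqref{eq:ND}. The forward implication is immediate: if $S$ is single-valued around $(\bar A,\bar b,\bar \mu)$, then in particular $S(\bar A,\bar b,\bar \mu)=\{\bar x\}$ is a singleton, so $\bar x$ is the unique solution of $P(\bar A,\bar b,\bar \mu)$; note this direction does not even require~\eqref{eq:DSCC}.

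For the converse, I would first recall that, since $g^*$ is $\mathcal{C}^2$-cone reducible at $\bar z$, Theorem~\ref{thm:Full} already provides the equivalence between condition~\eqref{eq:ND}, i.e. $\ker \bar A\cap \para\partial g^*(\bar z)=\{0\}$, and the single-valuedness plus Lipschitz continuity of $S$ around $(\bar A,\bar b,\bar \mu)$. It therefore suffices to show that, under~\eqref{eq:DSCC}, the uniqueness of $\bar x$ is equivalent to~\eqref{eq:ND}. First-order optimality for $P(\bar A,\bar b,\bar \mu)$ reads $0\in \frac{1}{\bar\mu}\bar A^*(\bar A\bar x-\bar b)+\partial g(\bar x)$, which gives $\bar z\in \partial g(\bar x)$, equivalently $\bar x\in \partial g^*(\bar z)$, so that~\eqref{eq:DSCC} is meaningful. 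The uniqueness characterization of \cite[Theorem~3.1]{FNP24} (already invoked in the proof of Theorem~\ref{thm:Nes}) states that $\bar x$ is the unique solution of $P(\bar A,\bar b,\bar \mu)$ if and only if
\[
\ker \bar A\cap \cone(\partial g^*(\bar z)-\bar x)=\{0\}.
\]

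The crux is then a single observation about where~\eqref{eq:DSCC} enters: in general one only has $\cone(\partial g^*(\bar z)-\bar x)\subseteq \para\partial g^*(\bar z)$, so uniqueness of $\bar x$ is a priori strictly weaker than~\eqref{eq:ND}; but because $\bar x\in \ri\partial g^*(\bar z)$ by~\eqref{eq:DSCC}, formula~\eqref{eq:ParC} applied with $\Omega=\partial g^*(\bar z)$ upgrades this inclusion to the equality $\cone(\partial g^*(\bar z)-\bar x)=\para\partial g^*(\bar z)$. Substituting this into the displayed uniqueness criterion identifies it verbatim with condition~\eqref{eq:ND}, and Theorem~\ref{thm:Full} then delivers the single-valued Lipschitz continuity of $S$ around $(\bar A,\bar b,\bar \mu)$, closing the equivalence. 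I do not expect a genuine obstacle here: the entire content is that~\eqref{eq:DSCC} collapses the conical hull in the uniqueness test of \cite{FNP24} to the parallel subspace in~\eqref{eq:ND}, after which the heavy lifting is done by Theorem~\ref{thm:Full}. The only point deserving care is confirming that first-order optimality places $\bar x$ in $\partial g^*(\bar z)$ so that~\eqref{eq:ParC} applies, which is routine.
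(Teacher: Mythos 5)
Your proposal is correct and follows essentially the same route as the paper's proof: the forward direction is trivial, and the converse applies the uniqueness characterization of \cite[Theorem~3.1]{FNP24}, uses \eqref{eq:DSCC} together with \eqref{eq:ParC} to upgrade $\cone(\partial g^*(\bar z)-\bar x)$ to $\para \partial g^*(\bar z)$, and then invokes Theorem~\ref{thm:Full}.
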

\begin{proof} The direction ``$\Rightarrow$'' is trivial. To prove the opposite implication, we suppose that $\bar x$ is the unique solution of problem $P(\bar A,\bar b,\bar \mu)$ and that the condition \eqref{eq:DSCC} is satisfied. By \cite[Theorem~3.1]{FNP24}, we have
\begin{equation}\label{eq:Uni}
    \ker\, \bar A\cap \cone(\partial g^*(\bar z)-\bar x)=\{0\}\quad \mbox{with}\quad \bar z=-\frac{1}{\bar \mu}\bar A^*(\bar A\bar x-\bar b).
\end{equation}
As $\bar x\in {\rm ri}\,\partial g^*(\bar z)$ by \eqref{eq:DSCC}, we obtain from  \eqref{eq:ParC} that 
\[
\cone\,(\partial g^*(\bar z)-\bar x)={\rm par}\, \partial g^*(\bar z).
\]
This together \eqref{eq:Uni} tells us that condition~\eqref{eq:ND} is satisfied. By Theorem~\ref{thm:Full}, the solution mapping $S$ is single-valued and Lipschitz continuous around $(\bar A,\bar b,\bar \mu)$ with $S(\bar A,\bar b,\bar \mu)=\bar x$.
\end{proof}

The above result  resembles   \cite[Corollary~4.8]{N24} when $g$ is the nuclear norm and the condition \eqref{eq:DSCC} is replaced by the {\em Strict Complementarity Condition}:
\begin{equation}\label{eq:SCC}
    \bar v\in \ri\, \partial g(\bx).
\end{equation}
For the case of nuclear norm, it is shown in \cite[Remark~3.7]{FNP24} that \eqref{eq:DSCC} and \eqref{eq:SCC} are equivalent. But at this stage, we do not know their correlation for more general  $\mathcal{C}^2$-cone reducible functions $g^*$.




\section{Full stability of convex additive-composite problems}\label{Sec:FS}

In this section, we study the following  optimization problem 
\begin{equation}\label{eq:OP}
 \Tilde P(\bar p)\qquad    \min_{x\in \bX}\quad  f(x,\bar p)+g(x),
\end{equation}
where $\bX$ and $\bP$ are two Euclidean spaces,  $f:\bX\times \mathbb{P}\to \R\cup\{+\infty\}$,   and $g:\bX\to \R\cup\{+\infty\}$ is  closed, proper,  convex. Suppose that $\bar x\in \dom f(\cdot, \bar p)\cap \dom g$ is an optimal solution of problem~\eqref{eq:OP}.  Throughout this section, we assume that 
\begin{itemize}
    \item[(A)] $f(\cdot, \bar p)$ is a convex function;
    \item[(B)] $f(\cdot,\cdot)$ is twice continuously differentiable around   $(\bar x,\bar p)$.
\end{itemize}
Define the function $\varphi:\bX\times \bP\to\R\cup\{+\infty\}$ by
\begin{equation}\label{eq:vp}
\varphi(x,p):=f(x,p)+g(x)\quad \mbox{for all}\quad (x,p)\in \bX\times \bP.
\end{equation}
The two-parametric perturbation of problem \eqref{eq:OP} is constructed by 
\begin{equation}\label{eq:OPpv}
    \min_{x\in \bX}\quad  \varphi(x,p)-\ip{v}{x},
\end{equation}
with {\em basic} perturbation $p\in \mathbb{P}$ and tilt parameter $v\in \bX$. This certainly covers the problem \eqref{eq:Q}. 

Let us recall the definition of {\em full stability} introduced by Levy, Poliquin, and Rockafellar \cite{LPR00}, which is a far-reaching extension of the notion of  tilt stability by Poliquin and Rockafellar \cite{PR98}. 

\begin{definition}[Full stability and tilt stability]\label{TS} The point $\bx$ is called a {\em fully stable} optimal solution of  problem \eqref{eq:OP} if there exists $\gam>0$ such that the solution map
\begin{equation}\label{Mg}
M_\gam(v,p):={\rm argmin}\; \{\varphi(x,p)-\ip{v}{x}|\; x\in \mathbb{B}_\gam(\bx)\}\quad \mbox{for}\quad (v,p)\in \bX\times \bP
\end{equation}
and the mapping 
\begin{equation}\label{def:mg}
m_\gam(v,p):=\inf\;\{\varphi(x,p)-\ip{v}{x}|\; x\in \mathbb{B}_\gam(\bx)\}
\end{equation}
are single-valued and Lipschitz continuous on some neighborhood of $(0, \bar p)\in \bX\times \bP$ with $M_\gam(0,\bar p)=\bx$.

The point $\bar x$ is called a {\em tilt stable} optimal solution  of problem \eqref{eq:OP} if the mapping $M_\gam(\cdot,\bar p)$  is single-valued and Lipschitz continuous on some neighborhood of $0\in \bX$ with $M_\gam(0,\bar p)=\bx$.

\end{definition}
As the function $f$ is twice continuously differentiable around $(\bar x,\bar p)$ { and $g$ is  closed, proper, convex}, the function $\varphi$ is {\em continuously prox-regular} in $x$ at $\bx$ for $0$, with compatible parameterization by $p$ at $\bar p$ in the sense of \cite[Definition~2.1]{LPR00}. Moreover, we observe that the so-called {\em basic constraint qualification}
\begin{equation}\label{eq:BCQ}
(0,q)\in \partial^\infty \varphi(\bar x,\bar p)\quad   \Longrightarrow\quad q=0
\end{equation}
 is also satisfied, where $\partial^\infty \varphi(\bar x,\bar p)$ is the singular subdifferential of the function $\varphi$ at $(\bar x,\bp)$ defined in \eqref{eq:sinS}.   This is due to the sum rule of the singular subdifferential \cite[Proposition~1.107]{M06} (see also \cite[Corollary~10.9]{RoW98}),  which gives
\[
\partial^\infty \varphi(\bar x,\bar p)=\partial^\infty g(\bar x)\times \{0\}
\]
using assumption (B). According to \cite[Proposition~3.5]{LPR00}, when the above  basic constraint qualification is satisfied,  the condition in the definition of full stability that $m_\gam(\cdot,\cdot)$ is Lipschitz continuous on some neighborhood of $(0, \bar p)\in \bX\times \bP$ is automatically met, provided that $M_\gam(0,\bar p)=\{\bx\}$. It is also worth mentioning that the latter condition $M_\gam(0,\bar p)=\bx$ means that $\bx$ is the unique (global) optimal solution of problem \eqref{eq:OP}, since both functions $f(\cdot, \bar p)$ and $g(\cdot)$ are convex in our framework.

Let us recall here \cite[Theorem~2.3]{LPR00}, \cite[Theorem~4.4]{MNR15}, and  \cite[Theorem~1.3]{PR98}, which provides  characterizations of full stability and tilt stability via second-order analysis.

\begin{theorem}[Characterizations of full stability]\label{thm:FS} Suppose that $\bx$ is an optimal solution of problem \eqref{eq:OP}. Then $\bx $ is a fully stable optimal solution of problem \eqref{eq:OP} if and only if the following two conditions hold:
\begin{itemize}
\item[(a)] $\inf\,\{\ip{z}{w}|\; (z,q)\in D^*(\partial_x \varphi)(\bar x,\bar p|\, 0)(w)\}>0$ for all $w\in \bX\setminus\{0\}$. 

\item[(b)] $(0,q)\in D^*(\partial_x \varphi)(\bar x,\bar p|\, 0)(0)$ $\Longrightarrow$ $q=0$.
\end{itemize}
Moreover, $\bx$ is a tilt stable optimal solution of problem \eqref{eq:OP} if and only if
\begin{equation}\label{eq:Tilt}
\inf\,\{\ip{z}{w}|\; z\in D^*\partial\varphi_{\bar p} (\bar x|\,0)(w)\}>0\quad\mbox{for all}\quad w\in \bX\setminus\{0\},
\end{equation}
where $\varphi_{\bar p}(\cdot):=\varphi(\cdot, \bar p)$. 
\end{theorem}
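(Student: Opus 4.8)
The statement is a synthesis of three established characterizations, so the plan is to confirm that our standing framework falls under the hypotheses of each and then invoke it. The unifying mechanism is that full stability and tilt stability, defined through single-valued Lipschitz localizations of the localized argmin map $M_\gam$, are equivalent to a \emph{strong metric regularity} of the parametrized subgradient mapping $(x,p)\mapsto \partial_x\varphi(x,p)$ at $(\bx,\bp)$ for $0$, and that this strong metric regularity is in turn detected by a positive-definiteness condition on the Mordukhovich coderivative $D^*(\partial_x\varphi)(\bx,\bp\,|\,0)$.

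First I would verify the three hypotheses under which \cite[Theorem~2.3]{LPR00}, \cite[Theorem~4.4]{MNR15}, and \cite[Theorem~1.3]{PR98} apply: (i) continuous prox-regularity of $\varphi$ in $x$ at $\bx$ for $0$ with compatible parameterization by $p$ at $\bp$; (ii) the basic constraint qualification \eqref{eq:BCQ}; and (iii) the normalization $M_\gam(0,\bp)=\{\bx\}$, which by convexity of $f(\cdot,\bp)+g$ amounts to $\bx$ being the unique global minimizer. As already recorded above, (i) follows from $f\in\mathcal{C}^2$ together with $g$ being closed, proper, convex, while (ii) follows from the singular-subdifferential sum rule, which yields $\partial^\infty\varphi(\bx,\bp)=\partial^\infty g(\bx)\times\{0\}$.

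With these in force, the full-stability equivalence is exactly \cite[Theorem~2.3]{LPR00} in the sharpened coderivative form of \cite[Theorem~4.4]{MNR15}: condition (a) is the positive-definiteness of the partial coderivative in the primal direction $w$, and condition (b) is the nondegeneracy in the basic-parameter direction, forcing the second component $q$ to vanish. For tilt stability I would freeze $p=\bp$. Then $M_\gam(\cdot,\bp)$ is the localized tilt-argmin map, the mixed coderivative collapses to the second-order subdifferential $D^*\partial\varphi_{\bp}(\bx\,|\,0)$, the basic-parameter direction is absent so (b) becomes vacuous, and only the positive-definiteness half survives; this is precisely \eqref{eq:Tilt} and \cite[Theorem~1.3]{PR98}.

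The hard part -- the technical heart of the cited results, which I would need to reconstruct were I not permitted to cite them -- is the passage from the single-valued Lipschitz localization of the argmin map to the coderivative positive-definiteness. The Mordukhovich criterion alone (trivial kernel of $D^*$) delivers only the Aubin property of the set-valued inverse, i.e. metric regularity; upgrading this to a \emph{single-valued} Lipschitz localization requires exploiting monotonicity of $\partial_x\varphi(\cdot,p)$, inherited from convexity of $\varphi(\cdot,p)$, together with prox-regularity, to exclude multivaluedness and to translate the infinitesimal coderivative inequality in (a) into a uniform local quadratic-growth estimate for the tilted, perturbed objective. Confirming that (a) exactly encodes this uniform growth, and that (b) captures the basic-parameter nondegeneracy keeping the localization jointly Lipschitz in $(v,p)$, is the step I expect to be most delicate.
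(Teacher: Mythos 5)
Your proposal is correct and matches the paper's treatment: the paper states this theorem as a recalled result, citing \cite[Theorem~2.3]{LPR00}, \cite[Theorem~4.4]{MNR15}, and \cite[Theorem~1.3]{PR98}, after verifying exactly the hypotheses you list (continuous prox-regularity from $f\in\mathcal{C}^2$ plus $g$ closed proper convex, and the basic constraint qualification via the singular-subdifferential sum rule). Your added commentary on the technical core of the cited results goes beyond what the paper records but is consistent with it.
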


It is worth mentioning that in the original result of Levy, Poliquin, and Rockafellar \cite[Theorem~2.4]{LPR00},  the condition in the above part (a) does not include the ``infimum''. This equivalent ``infimum'' form  coming from \cite[Theorem~4.4]{MNR15}  is  useful for us in this paper at some point.

Note also that $\partial_x \varphi(x,p)=\nabla_x f(x,p)+\partial g(x)$. It follows from the coderivative sum rule  \cite[Theorem~1.62]{M06}  that 
\begin{equation}\label{eq:Cod}
D^*(\partial_x \varphi)(\bar x,\bar p|\, 0)(w)=(\nabla^2_{xx} f(\bx,\bar p)^*w,\nabla_{xp} f(\bx,\bar p)^*w)+D^*\partial g(\bar x|\, -\nabla_x f(\bx,\bar p))(w)\times \{0\}.
\end{equation}
This tells us that condition~(b) in Theorem~\ref{thm:FS} is always true in our setting. Moreover, condition~(a) in  Theorem~\ref{thm:FS} turns to 
\begin{equation}\label{Tilt}
    \ip{\nabla^2_{xx} f(\bx,\bar p)w}{w}+\inf\{\ip{z}{w}|\; z\in D^*\partial g(\bx|\,
-\nabla_x f(\bx,\bar p))(w)\}>0 \quad \mbox{for all}\quad w\in \bX\setminus\{0\}, 
\end{equation}
which is also  condition \eqref{eq:Tilt} again due to the 
the  sum rule of the coderivative \cite[Theorem~1.62]{M06}: 
\[
D^*\partial_x \varphi_{\bar p}(\bar x|\,0)(w)=\nabla^2_{xx} f(\bar x,\bar p)^*w+D^*\partial g(\bar x|\, -\nabla_x f(\bx,\bar p))(w).
\]
Hence, both full stability and tilt stability are equivalent for our problem~\eqref{eq:OP} at the optimal solution $\bar x$ and they are characterized by condition~\eqref{Tilt}.

As $f(\cdot,\bar p)$ is a convex function, $\nabla^2_{xx} f(\bx,\bar p)$ is a positive semidefinite operator. Its {\em square root operator} $\Bar A:=(\nabla^2_{xx} f(\bx,\bar p))^{\frac{1}{2}}\in \mathcal{L}(\bX,\bX)$ with $\bar A^*\bar A=\nabla^2_{xx} f(\bx,\bar p)$ is unique and well-defined. Condition \eqref{Tilt} is equivalent to 
\begin{equation}\label{eq:Ati}
\|\bar A w\|^2+\inf\{\ip{z}{w}|\; z\in D^*\partial \tilde g(\bx|\,
0)(w)\}>0 \quad \mbox{for all}\quad w\in \bX\setminus\{0\}
\end{equation}
with $\tilde g(x):= g(x)+\ip{\nabla_x f(\bx,\bp)}{x-\bx}$ for  any $x\in \bX$. It follows from Theorem~\ref{thm:FS} that the above condition is satisfied if and only if 
$\bx$ is a tilt stable optimal solution of the following problem 
\begin{equation}\label{eq:CP2}
    \min_{x\in \bX}\quad \frac{1}{2}\|\Bar Ax-\bar b\|^2+\tilde g(x)\quad \mbox{with}\quad \bar b:=\Bar A\bx.
\end{equation}
This optimization problem is in the format of \eqref{eq:P}. The above observation allows us to establish a simple necessary condition of the tilt stability as below.

\begin{theorem}[Necessary condition for tilt stability] \label{thm:NeTi}Suppose that $\bx$ is a tilt stable optimal solution of problem \eqref{eq:OP}. Then we have 
\begin{equation}\label{con:ND2}
    \ker \nabla_{xx}^2 f(\bx,\bar p)\cap {\rm par}\, \partial g^*(\bar z)=\{0\}\quad\mbox{with}\quad \bar z:=-\nabla_x f(\bx,\bar p).  
\end{equation}

\end{theorem}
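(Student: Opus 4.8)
The plan is to reduce the claim to a single application of Theorem~\ref{thm:Nes} for the auxiliary least-squares problem \eqref{eq:CP2}. Recall from the discussion culminating in \eqref{eq:Ati} and \eqref{eq:CP2} that, under assumptions (A) and (B), tilt stability of $\bx$ for problem \eqref{eq:OP} is equivalent to tilt stability of $\bx$ for
\[
\min_{x\in\bX}\ \frac{1}{2}\|\bar Ax-\bar b\|^2+\tilde g(x),\qquad \bar A:=(\nabla^2_{xx}f(\bx,\bp))^{1/2},\ \bar b:=\bar A\bx,
\]
which is problem~\eqref{eq:CP2}, an instance of \eqref{eq:P} with tuning parameter $\bar\mu=1$ and regularizer $\tilde g(x)=g(x)+\ip{\nabla_xf(\bx,\bp)}{x-\bx}$. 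Thus I may assume $\bx$ is tilt stable for \eqref{eq:CP2}, and it suffices to deduce condition \eqref{eq:ND} for \eqref{eq:CP2} and then translate it into \eqref{con:ND2}.

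The main step is to upgrade tilt stability of \eqref{eq:CP2} into single-valuedness of the \emph{data} map $S(\bar A,\cdot,1)$ in the $b$-variable around $\bar b$, so that the reducibility-free Theorem~\ref{thm:Nes} applies. The key observation is that perturbing $b$ is a particular tilt perturbation: expanding the square shows that the objectives of $P(\bar A,b,1)$ and of the tilted problem $Q(\bar A,\bar b,1,\bar A^*(b-\bar b))$ differ only by a constant in $x$, whence
\[
S(\bar A,b,1)=\widehat S\big(\bar A,\bar b,1,\bar A^*(b-\bar b)\big)\qquad\text{for all }b.
\]
Tilt stability provides $\gam>0$ and a neighborhood of $0$ on which the local restricted map $M_\gam(\cdot,\bp)$ of \eqref{eq:CP2} is single-valued and Lipschitz with $M_\gam(0,\bp)=\bx$; by this Lipschitz property its values lie in the interior of $\mathbb{B}_\gam(\bx)$ for small tilts. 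Since \eqref{eq:CP2} is convex, such an interior local minimizer is a global minimizer, and a short convexity argument (a second global minimizer would force, along the connecting segment, a second interior minimizer over the ball, contradicting single-valuedness of $M_\gam$) shows that $\widehat S(\bar A,\bar b,1,\cdot)$ is in fact single-valued near $0$. As $b\mapsto\bar A^*(b-\bar b)$ is continuous and sends $\bar b$ to $0$, the displayed identity then yields single-valuedness of $S(\bar A,\cdot,1)$ around $\bar b$ with $S(\bar A,\bar b,1)=\bx$.

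It remains to run Theorem~\ref{thm:Nes} and simplify. Since $\bar b=\bar A\bx$ and the tuning parameter equals $1$, the base point $\frac{1}{\bar\mu}\bar A^*(\bar b-\bar A\bx)$ in Theorem~\ref{thm:Nes} is $0$, so the theorem gives $\ker\bar A\cap{\rm par}\,\partial\tilde g^*(0)=\{0\}$. Two identifications finish the proof. First, as $\bar A$ is the symmetric positive semidefinite square root of $\nabla^2_{xx}f(\bx,\bp)$, one has $\ker\bar A=\ker\bar A^2=\ker\nabla^2_{xx}f(\bx,\bp)$. Second, writing $c:=\nabla_xf(\bx,\bp)$, a direct conjugation gives $\tilde g^*(y)=g^*(y-c)+\ip{c}{\bx}$, hence $\partial\tilde g^*(0)=\partial g^*(-c)=\partial g^*(\bar z)$ and therefore ${\rm par}\,\partial\tilde g^*(0)={\rm par}\,\partial g^*(\bar z)$. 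Substituting these identities into $\ker\bar A\cap{\rm par}\,\partial\tilde g^*(0)=\{0\}$ produces exactly \eqref{con:ND2}. I expect the delicate point to be the middle step, namely transferring single-valuedness of the \emph{local} tilt map $M_\gam$ to the \emph{global} data map $S(\bar A,\cdot,1)$, where convexity and the fact that $\bar A^*(b-\bar b)$ only sweeps out $\rge\bar A^*$ (a harmless restriction, since single-valuedness on a full neighborhood of $0$ restricts to any subset containing $0$) must be handled with care.
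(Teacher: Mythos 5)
Your proof is correct, but it is organized differently from the paper's. The paper argues directly: it picks a sequence $x_k\in\ri\partial g^*(\bar z)\to\bx$, builds the tilt perturbations $v_k=\nabla^2_{xx}f(\bx,\bp)(x_k-\bx)$, checks that $x_k$ solves the tilted problem \eqref{eq:CP3}, invokes tilt stability to get uniqueness of $x_k$, and then applies the solution-uniqueness characterization of Fadili--Nghia--Phan together with \eqref{eq:ParC} to land on \eqref{con:ND2}. You instead observe that perturbing the data $b$ in $P(\bar A,b,1)$ is exactly a tilt perturbation by $\bar A^*(b-\bar b)$, upgrade tilt stability to single-valuedness of $S(\bar A,\cdot,1)$ near $\bar b$ (your convexity argument for passing from the local map $M_\gam$ to the global argmin is sound and mirrors \Cref{lem:SingleVal}), and then invoke \Cref{thm:Nes} as a black box --- correctly noting that it carries no cone-reducibility hypothesis. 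The two routes are secretly the same at the core: the $b$-perturbations $b_k=\bar b+\bar A(x_k-\bx)$ used inside the proof of \Cref{thm:Nes} correspond under your identity to precisely the tilts $v_k$ the paper uses. What your version buys is modularity (no repetition of the relative-interior argument) and the explicit, reusable observation that data perturbations in $b$ are tilt perturbations along $\rge\bar A^*$; what the paper's direct version buys is independence from the intermediate single-valuedness transfer, at the cost of redoing the FNP24 computation for the shifted regularizer $\hat g_k$.
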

\begin{proof} Suppose that $\bx$ is a tilt stable optimal solution of problem \eqref{eq:OP}. As discussed above, it is also a tilt stable optimal solution of problem \eqref{eq:CP2}.  In particular, by first-order optimality conditions, $\bar z\in \p g(\bar x)$,  i.e., $\bar x\in \p g^*(\bar z)$, thus  \eqref{con:ND2} is well-defined. Therefore, 
there exists a sequence $\{x_k \in\ri\partial g^*(\bar z)\}\to \bar x$. Define 
\[
v_k:=\nabla^2_{xx} f(\bx,\bar p)(x_k-\bx) \to 0
\]
and consider the  linearly perturbed problem 
\begin{equation}\label{eq:CP3}
 \min_{x\in \bX}\quad \psi_k(x):=\frac{1}{2}\|\Bar Ax-\bar b\|^2+\tilde g(x) -\ip{v_k}{x}
\end{equation}
As $\bar z\in \partial g(x_k)$ and $\Bar A^*\Bar A=\nabla^2_{xx} f(\bx,\bar p)$, we have  
\[
\partial \psi_k(x_k)=\Bar A^*(\Bar Ax_k-\Bar A\bx)+\partial g(x_k)-\bar z-v_k=\nabla^2_{xx} f(\bx,\bar p)(x_k-\bx)+\partial g(x_k)-\bar z-v_k\ni 0. 
\]
Hence,  $x_k$ is an optimal solution of problem \eqref{eq:CP3}. Since $v_k\to 0$ and $\bx$ is a tilt stable optimal solution of problem \eqref{eq:CP2}, $x_k$ is the unique solution of problem \eqref{eq:CP3} (for all $k\in\bN$ sufficiently large). Define $\hat g_k(x):= \tilde g(x) -\ip{v_k}{x}$ and obtain from  \cite[Theorem~3.1]{FNP24} that 
\begin{equation}\label{eq:Hori}
\ker \Bar A\cap \cone(\partial \hat g_k^*(-v_k)-x_k)=\{0\}, 
\end{equation}
which is indeed the characterization for solution uniqueness of problem~\eqref{eq:CP3}. Note that  
\[
\hat g_k(x)= g(x)-\ip{\bar z}{x-\bar x}-\ip{v_k}{x}.\]
We have $\partial \hat g_k(x)=\partial g(x)-\bar z-v_k$. It follows that $\partial \hat g_k^*(-v_k)=\partial g^*(\bar z)$. Since $x_k\in \ri \partial g^*(\bar z)$, we obtain from \eqref{eq:ParC} that 
 \[
 \cone(\partial \hat g^*(-v_k)-x_k)=\cone(\partial  g^*(\bar z)-x_k)={\rm par}\,\partial  g^*(\bar z). 
 \]
 This together with \eqref{eq:Hori} and the fact that $\ker \Bar A=\ker \nabla^2_{xx} f(\bx,\bar p)$ verifies \eqref{con:ND2}.
\end{proof}

When the Fenchel conjugate $g^*$ is $\mathcal{C}^2$-cone reducible, we show next that condition \eqref{con:ND2} is also sufficient for full stability  and tilt stability of problem~\eqref{eq:OP}.

\begin{theorem}[Characterization of full stability and sufficient condition for Lipschitz stability] \label{thm:Lips} Suppose that $\bx$ is an optimal solution of problem \eqref{eq:OP} and that the function $g^*$ is  $\mathcal{C}^2$-cone reducible at $\bar z=-\nabla_x f(\bx,\bar p)\in \partial g(\bx)$. Then the  following are equivalent: 
\begin{itemize}
    \item[{\bf (i)}] $\bx$ is a fully stable optimal solution of problem \eqref{eq:OP}.

    \item[{\bf (ii)}] $\bx$ is a tilt stable optimal solution of problem \eqref{eq:OP}.

    \item[{\bf (iii)}] Condition~\eqref{con:ND2} is satisfied. 
\end{itemize}
Consequently, if additionally $f(\cdot,p)$ is  convex for any $p\in \mathbb{P}$ around $\bar p$, then   the solution mapping 
\begin{equation}\label{eq:Sq}
S(p):={\rm argmin}\left\{f(x,p)+g(x)|\;x\in \bX\right\} 
\end{equation}
is single-valued and Lipschitz continuous around $\bar p$ with $S(\bar p)= \bar x$ provided that condition \eqref{con:ND2} is satisfied. 
\end{theorem}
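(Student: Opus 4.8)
The plan is to split the equivalence into three arcs and to locate almost all the difficulty in a single reduction. The arc \textbf{(i)}$\Leftrightarrow$\textbf{(ii)} I would simply read off from the material preceding the theorem: the coderivative computation \eqref{eq:Cod} for $\partial_x\varphi$ shows that condition \textbf{(b)} of Theorem~\ref{thm:FS} is automatic and that condition \textbf{(a)} collapses to the tilt condition \eqref{Tilt}, so in our convex setting full and tilt stability of $\bx$ for \eqref{eq:OP} coincide. The arc \textbf{(ii)}$\Rightarrow$\textbf{(iii)} is exactly Theorem~\ref{thm:NeTi}. Thus the real content is \textbf{(iii)}$\Rightarrow$\textbf{(ii)} together with the final Lipschitz assertion, and for these I would lean entirely on the reduction to the least-squares problem \eqref{eq:CP2} and on Section~\ref{Lip}.

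For \textbf{(iii)}$\Rightarrow$\textbf{(ii)}, recall from the discussion before the theorem that, through the square-root operator $\bar A:=(\nabla^2_{xx}f(\bx,\bp))^{1/2}$, tilt stability of $\bx$ for \eqref{eq:OP} is equivalent to \eqref{eq:Ati}, i.e.\ to tilt stability of $\bx$ for problem \eqref{eq:CP2}; the latter has the form \eqref{eq:P} with $A=\bar A$, $b=\bar b=\bar A\bx$, $\mu=1$, and regularizer $\tilde g(x)=g(x)-\ip{\bz}{x-\bx}$. The key bookkeeping step is to show that condition \eqref{con:ND2} is literally condition \eqref{eq:ND} read on \eqref{eq:CP2}. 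First, the base point attached to \eqref{eq:CP2} by Theorem~\ref{thm:Full} is $\bar A^{*}(\bar b-\bar A\bx)=0$, since $\mu=1$ and $\bar b=\bar A\bx$. Next, because $\tilde g$ is an affine tilt of $g$, a direct conjugacy computation gives $\tilde g^*(w)=g^*(w+\bz)+c$ for a constant $c$, hence $\partial\tilde g^*(0)=\partial g^*(\bz)$ and ${\rm par}\,\partial\tilde g^*(0)={\rm par}\,\partial g^*(\bz)$. Finally, as $\bar A$ is the positive semidefinite square root we have $\ker\bar A=\ker\bar A^*\bar A=\ker\nabla^2_{xx}f(\bx,\bp)$. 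Combining the three identities turns $\ker\bar A\cap{\rm par}\,\partial\tilde g^*(0)=\{0\}$ into \eqref{con:ND2}.

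It then remains to apply Theorem~\ref{thm:Sufi} to \eqref{eq:CP2}. Its hypothesis that $\tilde g^*$ be $\mathcal{C}^2$-cone reducible at $0$ I would obtain from the cone reducibility of $g^*$ at $\bz$: since $\epi\tilde g^*$ is the image of $\epi g^*$ under the affine bijection $(u,\beta)\mapsto(u-\bz,\beta-c)$, and $\mathcal{C}^2$-cone reducibility is preserved under invertible affine coordinate changes, reducibility at $\bz$ transfers to reducibility at $0$. Theorem~\ref{thm:Sufi} then makes the tilt-inclusive solution map $\widehat S$ of \eqref{eq:CP2} single-valued and Lipschitz near $(\bar A,\bar b,1,0)$; freezing $A,b,\mu$ and letting only the tilt $v$ vary gives single-valued Lipschitz dependence of the minimizer on $v$ near $0$, which is precisely tilt stability of \eqref{eq:CP2}, hence of \eqref{eq:OP}. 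This closes the cycle.

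For the consequent Lipschitz statement I would argue as follows. By \textbf{(i)} the point $\bx$ is fully stable, so the localized argmin map $M_\gam(v,p)$ of \eqref{Mg} is single-valued and Lipschitz near $(0,\bp)$; fixing $v=0$ shows that $p\mapsto M_\gam(0,p)$ is a single-valued Lipschitz localization of the global solution map $S$ of \eqref{eq:Sq} at $\bp$ for $\bx$, because under the added convexity of $f(\cdot,p)$ near $\bp$ the interior minimizer $M_\gam(0,p)$ of the convex function $\varphi(\cdot,p)$ is automatically a global minimizer. Corollary~\ref{cor:SingleVal} (convex-valuedness of $S$) then upgrades this localization to genuine single-valuedness and local Lipschitz continuity of $S$ around $\bp$ with $S(\bp)=\bx$. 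The hardest part, I expect, is the bookkeeping of the reduction in the second paragraph --- pinning the base point of \eqref{eq:CP2} to the origin, verifying that the affine tilt leaves both ${\rm par}\,\partial g^*(\bz)$ and the $\mathcal{C}^2$-cone reducibility untouched, and reconciling the localized $M_\gam$ notion of (tilt/full) stability with the single-valued Lipschitz solution maps delivered by Section~\ref{Lip}, where convexity is exactly what makes local and global minimizers agree.
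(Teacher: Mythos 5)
Your proposal is correct and follows essentially the same route as the paper: the (i)$\Leftrightarrow$(ii) arc from the discussion after Theorem~\ref{thm:FS}, (ii)$\Rightarrow$(iii) via Theorem~\ref{thm:NeTi}, (iii)$\Rightarrow$(ii) by reducing to the tilted least-squares problem \eqref{eq:CP2} and applying Theorem~\ref{thm:Sufi} after checking that \eqref{con:ND2} becomes \eqref{eq:ND} (with $\partial\tilde g^*(0)=\partial g^*(\bar z)$ and $\ker\bar A=\ker\nabla^2_{xx}f(\bx,\bp)$), and the final statement via full stability plus convexity making the localized minimizer $M_\gamma(0,p)$ global. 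Your bookkeeping of the affine tilt and of the $\mathcal{C}^2$-cone reducibility transfer is just a more explicit rendering of what the paper asserts in passing.
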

\begin{proof}
 Suppose that $\bx$ is an optimal solution of problem \eqref{eq:OP}. Note that $\bx$ is also an optimal solution  of problem \eqref{eq:CP2}. The equivalence between {\bf (i)} and {\bf (ii)} follows the arguments discussed  after Theorem~\ref{thm:FS}. Moreover, the implication [{\bf (ii)}$\Rightarrow${\bf (iii)}] is obtained by Theorem~\ref{thm:NeTi}.   It remains to verify [{\bf (iii)}$\Rightarrow${\bf (ii)}]. Indeed, suppose that condition \eqref{con:ND2} is satisfied and consider the following linearly  perturbed version of \eqref{eq:CP2}:
\begin{equation}\label{eq:pTilt}
 \min_{x\in \bX}\quad \frac{1}{2}\|\Bar Ax-\bar b\|^2+\tilde g(x) -\ip{v}{x}
\end{equation}
with tilt parameter $v\in \bX$. As $ \partial\tilde g^*(0)=\partial g^*(\bar z)$,  condition~\eqref{con:ND2} is equivalent to 
\[
\ker \Bar A\cap {\rm par}\,\partial \tilde g^*(0)=\{0\}. 
\]
Since $g^*$ is  $\mathcal{C}^2$-cone reducible at $\bar z$, we infer that  $\tilde g^*=g^*((\cdot)-\bar z)-\ip{\bar z}{\bar x}$ is  $\mathcal{C}^2$-cone reducible at $0$. Applying Theorem~\ref{thm:Sufi} to problem~\eqref{eq:pTilt} tells us that its solution mapping with variable $v$ is single-valued and Lipschitz continuous around $0$, i.e., $\bar x$ is a tilt stable solution of problem \eqref{eq:pTilt}. By Theorem~\ref{thm:FS}, we have 
\[
\|\Bar Aw\|^2+\inf\{\ip{z}{w}|\; z\in D^*\partial \tilde g(\bar x|\, 0)(w)\}>0\quad \mbox{for all}\quad w\in \bX\setminus\{0\}
\]
as in \eqref{eq:Ati}. Note also that $\|\Bar Aw\|^2=\ip{\Bar A^*\Bar A w}{w}=\ip{\nabla_{xx}^2 f(\bx,\bar p)w}{w}$. The above condition is equivalent to \eqref{Tilt}, which is exactly the condition \eqref{Tilt} or \eqref{eq:Tilt}. By Theorem~\ref{thm:FS}, $\bar x$ is a tilt stable optimal solution of problem~\eqref{eq:OP}, which means {\bf (ii)} is satisfied.

Finally, suppose that the function $f(\cdot,p)$ is convex for any $p\in \mathbb{P}$ around $\bar p$ and condition~\eqref{con:ND2} is satisfied. Hence, $\bar x$ is a fully stable optimal solution. It follows that there exists some $\gam>0$ such that the mapping 
\[
M_\gam(v,p):=\argmin\left\{f(x,p)+g(x)-\ip{v}{x}|\; x\in \mathbb{B}_\gam(\bx)\right\}
\]
is single-valued and Lipschitz continuous on some neighborhood of $\bp$ with $M_\gam(0,\bp)=\bx$. This allows us to find $\varepsilon>0$ sufficiently small such that $M_\gam(\{0\}\times \mathbb{B}_\varepsilon(\bp))\subset {\rm int}\, \mathbb{B}_\gam(\bx)$. Hence, $M_\gam(0,p)$ for $p\in \mathbb{B}_\varepsilon(\bp)$ is the unique {\em local} optimal solution of 
\begin{equation*}
\min_{x\in \bX}\quad f(x,p)+g(x).
\end{equation*}
As the function $f(\cdot,p)$ is convex for $p\in \mathbb{B}_\varepsilon(\bp)$, $M_\gam(0,p)$ is also the unique global optimal solution. As $S(p)\in M_\gam(0,p)$ for $p\in \mathbb{B}_\varepsilon(\bp)$, we have $S(p)=M_\gam(0,p)$. Hence, $S(p)$ is single-valued and Lipschitz continuous on $\mathbb{B}_\varepsilon(\bp)$. The proof is completed. 
\end{proof}

Let us conclude this section by providing a simple example showing that  condition \eqref{con:ND2} is  not necessary for  the Lipschitz stability of the solution mapping $S(p)$ in \eqref{eq:Sq}. This essentially highlights the difference between Theorem~\ref{thm:Full} and Theorem~\ref{thm:Lips}, where the function $f$ in Theorem~\ref{thm:Full} has a special structure as a quadratic function. 

\begin{example}{\rm Consider the following simple optimization problem
\begin{equation}\label{eq:Ex1}
\min_{x\in \R}\quad x^4p,
\end{equation}
where $p\in \R$ is a parameter around $\bar p=1$,  $f(x,p)=x^4p$, and $g(x)=0$. The solution set $S(p)=\{0\}$ is single-valued and Lipschitz continuous around $\bar p$. At $\bar p$, the solution of problem \eqref{eq:Ex1} is $\bx=0$.  
Note that   $\bar z=-\nabla_x f(\bar x,\bar p)=0$, $g^*(v)=\delta_{0}(v)$ for $v\in \R$, and $\nabla^2_{xx} f(\bx,\bar p)=0$. It follows that $\partial g^*(\bar z)=\R$ and $\ker \nabla^2_{xx} f(\bx,\bar p)=\R$, which implies
\[
\ker \nabla^2_{xx} f(\bx,\bar p)\cap {\rm par}\, \partial g^*(\bar z)=\R. 
\]
Conditions \eqref{con:ND2}   fails in this case. 
}
\end{example}
\section{Conclusion}
In this paper, we provide necessary and sufficient conditions for the Lipschitz stability of the solution mapping \eqref{eq:Sp} of regularized least-squares optimization problem in the format of \eqref{eq:P}. When the Fenchel conjugate of the regularizer $g$ is $\mathcal{C}^2$-cone reducible, we show that the Lipschitz stability of the solution mapping \eqref{eq:Sp} is equivalent to our condition~\eqref{eq:ND}. This result recovers the corresponding findings in \cite{BBH23,N24} when the regularizer $g$ is the $\ell_1$ norm, the $\ell_1/\ell_2$ norm, or the nuclear norm. 

One of the open questions we plan to investigate in the future is the computation of the Lipschitz modulus of the solution mapping when Lipschitz stability occurs. In theory, this Lipschitz modulus may be obtained by calculating the coderivative of the solution mapping \eqref{eq:Sp}; see, e.g., \cite[Theorem~4.10]{M06} and \cite[Theorem 9.39]{RoW98}. However, this approach requires some involved computations of second-order structures of the regularizer $g$, which can be particularly challenging when  $g$ does not have a polyhedral structure. Finding the Lipschitz modulus without relying on second-order information of $g$, as in our approach, remains an open area of research that we wish to continue exploring. 

Another interesting topic that we aim to study is the sensitivity analysis of the solution mapping \eqref{eq:Sp} under our condition~\eqref{eq:ND}. Some initial results in this direction have been established recently in \cite{BBH23,BBH24,BP21,BLPS21,BPS24,VDFPD17}. However, \cite[Example~3.14]{N24} provides an example where the solution mapping \eqref{eq:Sp} is single-valued and Lipschitz continuous but not differentiable when $g$ is the $\ell_1/\ell_2$ norm. Identifying the conditions under which the solution mapping is differentiable is  one of our ongoing projects. 

\bigskip

\end{document}